\numberwithin{equation}{section}
\DeclareMathOperator{\arccosh}{arccosh}
\newcommand{\beq}{\begin{equation}}
\newcommand{\eeq}{\end{equation}}
\newcommand{\beqs}{\begin{eqnarray*}}
\newcommand{\eeqs}{\end{eqnarray*}}
\newcommand{\beqn}{\begin{eqnarray}}
\newcommand{\eeqn}{\end{eqnarray}}
\newcommand{\beqa}{\begin{array}}
\newcommand{\eeqa}{\end{array}}
\newtheorem{prop}{Proposition}[section]
\newtheorem{theo}[prop]{Theorem}
\newtheorem{lem}[prop]{Lemma}
\newtheorem{cor}[prop]{Corollary}
\newtheorem{rem}[prop]{Remark}
\newtheorem{ex}[prop]{Example}
\newtheorem{defi}[prop]{Definition}
\newcommand{\R}{{\mathbb R}}
\newcommand{\N}{{\mathbb N}}
\newcommand{\T}{\mathcal{T}}
\newcommand{\wt}{\widetilde}
\newcommand{\tm}{\begin{theo}}
\newcommand{\tmd}{\end{theo}}
\newcommand{\co}{\begin{cor}}
\newcommand{\cod}{\end{cor}}
\newcommand{\prp}{\begin{prop}}
\newcommand{\prpd}{\end{prop}}
\newcommand{\Hmm}[1]{\leavevmode{\marginpar{\tiny%
$\hbox to 0mm{\hspace*{-0.5mm}$\leftarrow$\hss}%
\vcenter{\vrule depth 0.1mm height 0.1mm width \the\marginparwidth}%
\hbox to
0mm{\hss$\rightarrow$\hspace*{-0.5mm}}$\\\relax\raggedright #1}}}
\begin{document}

\title[Combinatorial Ricci flows and the hyperbolization of a class of compact 3-manifolds]{Combinatorial Ricci flows and the hyperbolization of a class of compact 3-manifolds}

\author{Ke Feng}
\address{Ke Feng: School of Mathematical Sciences, University of Electronic Science and Technology of China; No.2006, Xiyuan Ave, West Hi-Tech Zone, Chengdu, Sichuan, 611731, P.R.China}
\email{kefeng@uestc.edu.cn}

\author{Huabin Ge}
\address{Huabin Ge: School of Mathematics, Renmin University of China, Beijing, 100872, P.R. China}
\email{hbge@ruc.edu.cn}

\author{Bobo Hua}
\address{Bobo Hua: School of Mathematical Sciences, LMNS,
Fudan University, Shanghai 200433, China; Shanghai Center for
Mathematical Sciences, Fudan University, Shanghai 200433,
China.}
\email{bobohua@fudan.edu.cn}



\begin{abstract} We prove that for a compact 3-manifold $M$ with boundary admitting an ideal triangulation $\T$ with valence at least 10 at all edges, there exists a unique complete hyperbolic metric with totally geodesic boundary, so that $\mathcal{T}$ is isotopic to a geometric decomposition of $M$.
Our approach is to use a variant of the combinatorial Ricci flow introduced by Luo \cite{[L]} for pseudo 3-manifolds. In this case, we prove that the extended Ricci flow converges to the hyperbolic metric exponentially fast. 
\end{abstract}

\maketitle
\tableofcontents


\section{Introduction}
Suppose that $M$ is a compact irreducible atoroidal Haken 3-manifold whose boundary has zero Euler characteristic, then the interior of $M$ admits a complete hyperbolic metric of finite volume. This is famous Thurston's hyperbolization theorem for Haken manifolds, whose proof is called ``the big monster".  Due to the ``JSJ" decomposition theorem and the Dehn surgery technique, compact 3-manifolds with toric boundary often appear. For a compact 3-manifold with boundary, Thurston's hyperbolization theorem states that it admits a hyperbolic structure if and only if it is irreducible without incompressible tori and atoroidal; see Thurston \cite{[T],[O],[Ka]}. Thurston conjectured that all compact hyperbolic 3-manifolds can be geometrically triangulated. In this paper, under suitable combinatorial assumptions, we confirm this conjecture for such manifolds with higher genus boundary components.
\tm\label{thm:main0}
Let $M$ be a compact 3-manifold with boundary components consisting of surfaces of genus at least 2. If $M$ admits an ideal triangulation $\mathcal{T}$ with valence at least 10 at all edges, then there exists a unique complete hyperbolic metric on $M$ with totally geodesic boundary, so that $\mathcal{T}$ is isotopic to a geometric decomposition of $M$.
\tmd

In the above theorem, the condition is topological and combinatorial, and the conclusion is geometrical. Our approach is based on the combinatorial Ricci flow method which is analytical. It is a large program to hyperbolize 3-manifolds by combinatorial Ricci flows, initiated by Luo \cite{[L]}. The combinatorial Ricci flow aims to find a hyperbolic metric and a corresponding geometric triangulation. To realize the program, we first try to find suitable triangulation (combinatorial) constraints from topological conditions. For instance, one needs to show that a compact 3-manifold admits an ideal triangulation with edge valences at least $10$ under suitable topological conditions, such as ``irreducible without incompressible tori and atoroidal", etc. Then we prove the convergence of certain combinatorial Ricci flow under these combinatorial conditions. That is, one is from topology to combinatorics, and the other is the Ricci flow method with combinatorial restrictions. For closed 3-manifolds and cusped 3-manifolds with torus boundary, the projects are similar.

In geometric analysis, the Ricci flow is a powerful technique to deform the metrics on a manifold, which leads to many important results, e.g. the solution of Poincar\'e's conjecture. 
For a triangulated surface, Chow and Luo \cite{[BL]} introduced the combinatorial Ricci flow to deform the circle packing metrics, and gave an alternative proof of the celebrated Koebe-Andreev-Thurston theorem.
For a compact triangulated 3-manifold with boundary consisting of surfaces of negative Euler characteristic, Luo \cite{[L]} introduced a combinatorial Ricci flow on the set of edges in order to find the complete hyperbolic metric with totally geodesic boundary on the manifold. 
 In this paper, we study a variant Ricci flow analogous to Luo's combinatorial Ricci flow and prove the existence of the polyhedral metric with zero-curvature on edges under some combinatorial condition.

We recall the setting of 3-dimensional triangulated spaces.
Let $\{T_1,\cdots, T_t
\},$ $t\in \N$, be a finite collection of combinatorial tetrahedra and $\mathscr{T}$ be the disjoint union $T_1 \sqcup\cdots \sqcup T_t,$ which is a simplicial complex.
The quotient space $(M,\mathcal{T})=\mathscr{T}/\sim,$ via a family of affine isomorphisms pairing faces of
tetrahedra in $\mathscr{T}$, is called a \emph{compact pseudo 3-manifold} $M$ (together with a triangulation $\mathcal{T}$). Note that simplexes in $\mathcal{T}$ are equivalent classes of simplexes in $\mathscr{T}.$ {Pseudo 3-manifolds are very general concepts, which include manifolds with triangulation as special cases.}
$M$ is called a \emph{closed pseudo 3-manifold} if each codimension-$1$ face of tetrahedra in $\mathscr{T}$ is identified with another codimension-$1$ face. We denote by $V=V(\T)$ (resp. $E=E(\mathcal{T})$) the set of vertices (resp. edges) in $\mathcal{T},$ which are equivalent classes for vertices (resp. edges) of tetrahedra in $\mathscr{T}$ via the gluing. We define the \emph{valence of an edge} $e\in E,$ denoted by $d_e$, to be the number of edges in
$\mathscr{T}$ in the equivalent class of $e.$

A \emph{hyper-ideal tetrahedron} $\sigma$ in $\mathbb{H}^3,$ the hyperbolic $3$-space, is a compact convex polyhedron that is diffeomorphic to a truncated tetrahedron in the 3-dimensional Euclidean space and its four hexagonal faces are right-angled hyperbolic hexagons;
see Figure~\ref{fig:hyperideal1}.
\begin{figure}[htbp]
\begin{center}
\includegraphics[width=0.5\linewidth]{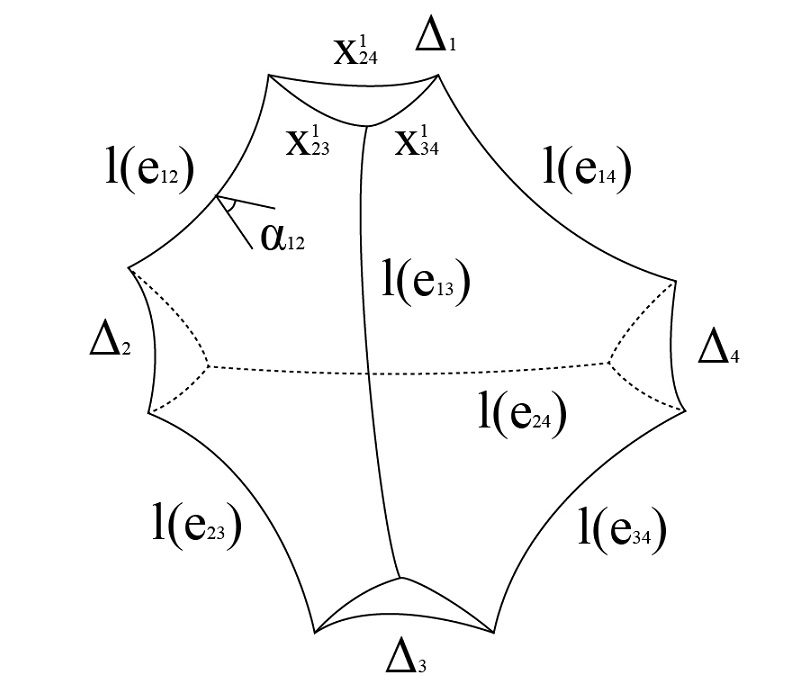}
\caption{\small {Hyper-ideal tetrahedron}}
\label{fig:hyperideal1}
\end{center}
\end{figure}
The four triangular faces, isometric to hyperbolic triangles, are called vertex triangles. An edge in a hyper-ideal tetrahedron is the intersection of two hexagonal faces. The dihedral angle at an edge is the angle between two hexagonal faces adjacent to it.
Let $\Delta_i,$ $i=1,2,3,4,$ be the four vertex triangles of the truncated hyper-ideal tetrahedra $\sigma.$ For $\{i,j\}\subset \{1,2,3,4\},$ we denote by $e_{ij}$ the edge joining $\Delta_i$ to $\Delta_j.$ The length of $e_{ij}$ is denoted by $l_{ij}$ and dihedral angle at $e_{ij}$ is denoted by $\alpha_{ij}$ (always assuming $l_{ij} = l_{ji}$, $\alpha_{ij} = \alpha_{ji}$). The geometry of a hyper-ideal tetrahedron is determined by $(l_{12},l_{13},l_{14},l_{23},l_{24},l_{34})\in \R^6_{>0},$ {the positive orthant of $\R^6.$} The set of isometric classes of hyper-ideal tetrahedra can be described as a subset $\mathcal{L}$ of $\R^6_{>0},$ which is not a convex subset; see Proposition~\ref{REAL}. {From the combinatorial point of view, any hyper-ideal tetrahedron corresponds to a combinatorial tetrahedron $T$ via identifying $\Delta_i$ (resp. $e_{ij}$) with vertices (resp. edges) of $T$. For a finite set $A,$ we denote by $\R^A$ (resp. $\R^A_{>0}$) the set of functions (resp. positive functions) on $A.$ For a fixed ordering of $A,$ each function in $\R^A$ corresponds to a vector in $\R^{|A|},$ where $|A|$ denotes the cardinality of $A.$ 

\begin{defi}\label{def:t1} A \emph{hyper-ideal polyhedral metric}, called \emph{hyper-ideal metric} in short, on $(M,\T)$ is obtained by replacing each tetrahedron in $\T$ by a hyper-ideal tetrahedron and replacing the affine gluing homeomorphisms by isometries preserving the corresponding hexagonal faces. We denote by $l\in \mathbb{R}^E_{>0}$ the edge length vector of the hyper-ideal metric, written as $l=(l(e_1), \dots, l(e_m))$, where $E = \{e_1, \dots, e_m\}.$ The above construction yields a metric space $S(M,\T,l),$ which is uniquely determined by $l.$  We denote by $\mathcal{L}(M, \mathcal{T})\subset \R^E_{>0}$ the set of all hyper-ideal metrics on $(M, \mathcal{T})$ parametrized by the edge length vector $l$. \end{defi}

\begin{defi}\label{defi:realcurv}For a closed pseudo 3-manifold $(M,\T)$ with the edge length vector $l,$ the \emph{Ricci curvature} at each edge $e$ is defined as
\begin{equation}\label{eq:curv}K_e(l)=2\pi-C_e\end{equation}
where $C_e$ is the cone angle at $e,$ i.e. the total dihedral angle in tetrahedra incident to $e.$ {This provides the Ricci curvature vector,
$K=K(l)=(K_{e_1}(l),\cdots,K_{e_m}(l)),$ where $E=\{e_1,\cdots,e_m\}.$ }
\end{defi}

For a closed pseudo 3-manifold $(M,\T),$ let $U(V)$ denote an open regular neighborhood of the set of vertices $V$ in $M.$ {For any given $l\in \mathcal{L}(M, \mathcal{T}),$ by the construction the metric space $S(M,\T,l),$ which is a hyperbolic cone metric with possible singularity on edges, is homeomorphic to $M-U(V)$}. The main purpose is to find cone metrics with no singularity on edges, i.e. $K_e(l)=0$ for all $e\in E,$ {called \emph{zero-curvature hyper-ideal metrics}.}

We recall the motivation for the above construction in the literature; see e.g. \cite{[L]}. Suppose $N$ is a compact 3-manifold with non-empty boundary, whose boundary consists of a union of surfaces of negative Euler characteristic. {The purpose is to find a hyperbolic metric on $N$ with totally geodesic boundary.} Let $C(N)$ be the compact 3-space obtained by coning off each boundary component of $N$ to a point. In particular, if $N$ has $k$ boundary components, then there are exactly $k$ cone points $\{v_1,...,v_k\}$ in $C(N)$ so that $C(N) - \{v_1,...,v_k\}$ is homeomorphic to $N - \partial N.$ An \emph{ideal triangulation} $\T$ of $N$ is a triangulation $\T$
of $C(N)$ such that the vertices of the triangulation are exactly the cone points
$\{v_1,...,v_k\}.$ By Moise \cite{[M]}, every compact 3-manifold $N$ can be ideally triangulated. In our terminology, $(C(N), \T)$ is a closed pseudo 3-manifold
 and $N$ is homeomorphic to $C(N)-st(v_1,...,v_k),$ where $st(v_1,...,v_k)$ is the
open star of the vertices $\{v_1,...,v_k\}$ in the second barycentric subdivision of the
triangulation $\T.$ {As in Definition~\ref{def:t1}, we can endow $(C(N), \T)$ with various hyper-ideal metrics. If there is a hyper-ideal metric $l\in \mathcal{L}(C(N), \mathcal{T})$ with zero Ricci curvature at edges, then we obtain a hyperbolic metric with totally geodesic boundary on $N$ given by the metric space $S(C(N),\T,l)$ constructed in Definition~\ref{def:t1}. This is called a \emph{geometric decomposition} (or \emph{geometric realization}) of a hyperbolic metric on $N$ associated with the ideal triangulation $\T.$}


Motivated by \cite{[BL]}, for a compact 3-manifold with boundary equipped with ideal triangulation, {or more generally a closed pseudo 3-manifolds $(M,\T),$ Luo \cite{[L]} initiated the following combinatorial Ricci flow $l(t)\in \mathcal{L}(M,\T)$ to study the existence of hyperbolic metrics,
\begin{equation}\label{eq:luoflow}\frac{d }{dt}l(t)=K(l(t)),\quad\forall t\geq0.\end{equation}  The vector-valued equation \eqref{eq:luoflow} reads as $$\frac{d }{dt}l(e,t)=K_e(l(t)),\quad \forall e\in E, t\geq0.$$ }This is a negative gradient flow of a locally convex function, related to the co-volume functional, on $\mathcal{L}(M,\T).$ One of main difficulties for the flow approach is that $\mathcal{L}(M,\T)$ is not convex in $\R^E.$

To circumvent the difficulty, Luo and Yang \cite{[LY]} extended the set of hyper-ideal metrics to a general framework.
Given a tetrahedron $\{1,2,3,4\},$ as is shown by \cite{[LY]}, for any $(l_{12},\cdots,l_{34})\in \R^{6}_{>0},$ one can associate it with a {generalized hyper-ideal tetrahedron} such that the extended dihedral angles $\wt{\alpha}_{ij}$, extending dihedral angles for a hyper-ideal tetrahedron, are continuous functions of the edge lengths $l_{ij};$ see Definition~\ref{defi:dihe}. {For any $l\in \R^{6}_{>0}-\mathcal{L},$ it corresponds to a degenerate hyper-ideal tetrahedron; see Section~\ref{sec:pre} for details.

We define generalized hyper-ideal metrics on a compact pseudo 3-manifold $(M,\T)$ following \cite{[LY]}. For any $l\in \R^E_{>0},$ we replace each tetrahedron in $\T$ by a generalized hyper-ideal tetrahedron with edge lengths given by $l,$ and glue them together in the topological sense. Since there are possibly some degenerate hyper-ideal tetrahedra, it may not produce any metric space structure. However, the extended dihedral angles are well defined, which are sufficient for our applications. 

\begin{defi}\label{defi:ghi}  Let $(M,\T)$ be a compact pseudo 3-manifold. We call any $l\in \R^E_{>0}$ a \emph{generalized hyper-ideal metric} on $(M,\T).$ For a closed pseudo 3-manifold, the \emph{generalized Ricci curvature} of an edge $e,$ denoted by $\wt{K}_e(l),$ is defined similarly as in \eqref{eq:curv} by using extended dihedral angles $\wt{\alpha}_{ij},$ and the generalized Ricci curvature vector is denoted by $\wt{K}=\wt{K}(l);$ see Definition~\ref{defi:pre} for the precise definition. \end{defi}
Note that the generalized Ricci curvature $\wt{K}$ extends the Ricci curvature $K$ for hyper-ideal metrics, i.e. $\wt{K}(l)={K}(l)$ for any $l\in \mathcal{L}(M,\T).$

By introducing a change of variables in \eqref{eq:luoflow} and using the ideas in \cite{[GJ],[GJS],[GeH]}, in this paper we study the following {extended} Ricci flow {on the set of generalized hyper-ideal metrics $\R^{E}_{>0}$,}
\begin{equation}\label{eq:newflow}\left\{\begin{array}{ll}\frac{d }{dt}l(t)={\wt K}(l(t)) l(t),& l(t)\in \R^{E}_{>0},  \forall t> 0, \\ l(0)=l_0\in \R^{E}_{>0}.&\end{array}\right.\end{equation}
We say that \emph{the flow \eqref{eq:newflow} converges} if there exists $l_\infty\in \R^E_{>0}$ such that
$$l(t)\to l_\infty,\quad t\to \infty.$$
We prove the long-time existence and uniqueness of the extended Ricci flow.
\tm\label{thm:gp1} For any generalized hyper-ideal metric $l_0\in \R^E_{>0},$ there exists a unique solution of the extended Ricci flow \eqref{eq:newflow} for all time $t\in [0,\infty).$ 
\tmd In the following, we characterize the convergence property of the extended Ricci flow.
\tm\label{thm:expc} For a closed pseudo 3-manifold $(M,\T),$ there exists a zero-curvature hyper-ideal metric if and only if the extended Ricci flow \eqref{eq:newflow} converges to a hyper-ideal metric for some initial data $l_0\in \R^E_{>0}.$ In this case, for any initial data in $\R^E_{>0},$ the extended Ricci flow converges to a hyper-ideal metric exponentially fast .
\tmd

\begin{rem} 
The exponential convergence result suggests that one can compute the zero-curvature hyper-ideal metric using numerical methods effectively.
\end{rem}


The convergence of the extended Ricci flow is the main purpose of the paper. We first give an example.
\begin{ex}\label{ex:12va} There is a 3-manifold $N,$ whose boundary is a surface of genus $2,$ admitting an ideal triangulation $\T;$ see \cite{[MF]}. The triangulation $\T$ consists of $2$ tetrahedra and one edge $e$ with $d_e=12.$ One can show that
 for the initial data $l_0(e)>0,$ the extended Ricci flow \eqref{eq:newflow} converges to a zero-curvature hyper-ideal metric with $l_\infty(e)=\arccosh x_0,$ where $x_0\approx 1.1371$ is the positive solution of {$\frac{x^3+2x^2+x}{2x^3+3x^2-1}=\frac{\sqrt 3}{2}.$} This provides a geometric decomposition of the hyperbolic metric on $N$ associated with $\T.$
\end{ex}


By introducing some combinatorial condition on the valence of edges for a closed pseudo 3-manifold $(M,\T),$ we can prove the convergence of the extended Ricci flow, and hence obtain the existence of zero-curvature hyper-ideal metric. The following are main results of the paper.
For any interval $I\subset (0,\infty),$ we write $$I^E:=\{l\in \R^E_{>0}: l(e)\in I,\ \forall e\in E\}.$$

\tm\label{thm:main10}
Let $(M, \T)$ be a closed pseudo 3-manifold satisfying that $d_e \ge 10$ for all $e\in E.$ Then there exists a zero-curvature hyper-ideal metric $l\in \mathcal{L}(M, \mathcal{T}),$ which is unique in the class {$\R^E_{> 0}.$} Moreover, $$l\in [(3\max_{e\in E} d_e)^{-1},\arccosh 3]^E.$$ 
For any initial data in $\R^E_{> 0},$ the extended Ricci flow \eqref{eq:newflow} converges to $l$ exponentially fast.

\tmd
\begin{rem}
\begin{enumerate}[(i)]
\item There are some closed pseudo 3-manifolds satisfying the combinatorial condition that each edge has valence at least $10,$ e.g. Example~\ref{ex:12va}. Under this condition, we conclude the existence of  the metric with zero Ricci curvature.
\item The metric $l$ we obtained is a hyper-ideal metric, for which it produces a metric space $S(M, \T, l)$ via the gluing.
\item We give a quantitative estimate for the size of the metric with zero Ricci curvature.
\item If there exists a zero-curvature hyper-ideal metric, then it is unique by Luo and Yang's rigidity theorem \cite[Theorem~1.2]{[LY]}; see also Theorem~\ref{thm:uniquezero}.
\end{enumerate}
\end{rem}

We sketch the proof strategy as follows. In step one,
we consider the extended Ricci flow $l(t)$ with a small initial data $l_0\in (0,\arccosh 3)^E.$
We prove that $l(t)$ is uniformly bounded on $[0,\infty),$ i.e. there exists $c>0$ such that $l(t)\in (c,\arccosh 3)^E$ for all $t\geq 0.$ This yields the convergence of the extended Ricci flow, up to a time sequence $t_i\to \infty$ ($i\to \infty$), to the some limit $l_\infty\in \R^E_{>0}.$  For the upper bound estimate, we derive a useful estimate for the dihedral angle at the longest edge of a generalized hyper-ideal tetrahedron; see Corollary~\ref{lem:longest}. {By the dihedral angle estimate, we obtain the upper bound estimate of $l(t)$ in Theorem~\ref{LONG} using the combinatorial condition that $d_e\geq 10$ for all $e\in E.$ The lower bound estimate, Theorem~\ref{bd}, is based on the upper bound estimate and the dihedral angle estimate for the edge with small length; see Proposition~\ref{prop:c0}.} In step two, we need to show that $l_\infty$ is in fact a hyper-ideal metric, i.e. $l_\infty\in \mathcal{L}(M,\T).$ The set $\mathcal{L}(M,\T)$ has been characterized by Luo and Yang \cite{[LY]}; see Proposition \ref{REAL} below. For our purpose, we give a new criterion that $(0,\arccosh 3]^E\subset \mathcal{L}(M,\T);$ see Theorem~\ref{Range}.  Then one can show that $l_\infty$ is a zero-curvature hyper-ideal metric and the other statements follow.


Note that Theorem~\ref{thm:main10} provides a hyperbolic cone metric on a general pseudo 3-manifold, which might not be a manifold. Applying Theorem~\ref{thm:main10} for compact 3-manifolds with non-empty boundary and associated ideal triangulations, we prove Theorem~\ref{thm:main0}. At the end, we give a remark on the result of Theorem~\ref{thm:main0}.

\begin{rem}Using tricky topological arguments, Costantino, Frigerio, Martelli and Petronio \cite{[CFMP]} proved the following related result for Theorem~\ref{thm:main0}: if a 3-manifold $M$ has an ideal triangulation $\T$ whose edges have valence at least 6, then $M$ admits a hyperbolic metric with totally geodesic boundary and cusped ends, and the edges are homotopically non-trivial with respect to the boundary of $M,$ whence homotopic to geodesics. Although $M$ admits a hyperbolic metric $g,$ it is not known whether there is any geometric decomposition (or realization) of the hyperbolic metric $g$ associated with the given triangulation $\T.$ It is possible that $g$ is realized by another triangulation $\T',$ not by $\T.$ They conjectured that that if $M$ has an ideal triangulation $\T$ whose edges have valence at least 6, then $\T$ is realized by hyperbolic partially truncated tetrahedra; see \cite[Conjecture~1.8]{[CFMP]}. This is easily verified for the cases of triangulations whose edges have same valence, but not known in general. In this paper, we use the combinatorial Ricci flow method to partially confirm the conjecture for triangulations with edge valences at least $10;$ see Theorem~\ref{thm:main0}. Moreover, in this case one can find a geometric decomposition of the hyperbolic metric $g$ associated with  $\T$ using the extended Ricci flow~\eqref{eq:newflow} by Theorem~\ref{thm:main10}. See \cite{[Ko],[La2],[La1],[LDD],[GHR]} for more constraints of the triangulation and topology on 3-manifolds.
\end{rem}

The paper is organized as follows.
In the next section, we recall some results of generalized hyper-ideal tetrahedra obtained by \cite{[LY]}. In Section~\ref{sec:gat}, we prove some new geometric properties for a generalized hyper-ideal tetrahedron. In Section~\ref{sec:ecf}, we study general properties for the extended Ricci flow \eqref{eq:newflow} and prove Theorem~\ref{thm:gp1}. In the last section, we prove the main results, Theorem~\ref{thm:main0}, Theorem~\ref{thm:expc}, Theorem~\ref{thm:main10}. 

\bigskip

\bigskip

\section{Preliminaries}\label{sec:pre}

In this section, we recall some results on the geometry of a hyper-ideal (or generalized hyper-ideal) tetrahedron obtained by {\cite{[BB],[Sch],[FP],[Riv1],[CGV],[LY]}.}
\subsection{{Generalized hyper-ideal tetrahedra}}

A hyper-ideal tetrahedron $\sigma$ in $\mathbb{H}^3$ is a compact polyhedron that is diffeomorphic to a truncated tetrahedron in $\mathbb{R}^3$. An edge in a hyper-ideal tetrahedron is the intersection of two hexagonal faces, and a vertex edge is the intersection of one hexagonal face and one triangular face.  A hyper-ideal tetrahedron has the following properties: firstly, its four hexagonal faces are right-angled hyperbolic hexagons; secondly, its four triangular faces, called vertex triangles, are isometric to hyperbolic triangles; thirdly, the dihedral angle between a hexagonal face and a vertex triangle is $\frac{\pi}{2}$, and the angle between two hexagonal faces adjacent to one edge is called the dihedral angle at the edge.

{The following is another description of hyper-ideal tetrahedra by \cite{[CGV]}.} Let $\mathbb{K}^3 \subset \mathbb{R}^3$ be the open ball representing $\mathbb{H}^3$ via the  Klein model. Then we can obtain a hyper-ideal tetrahedron by the following process. Let $\mathscr{P} \subset \mathbb{R}^3$ be a convex Euclidean tetrahedron such that each vertex $v_i (i = 1,2,3,4)$ lies in $\mathbb{R}^3\backslash \mathbb{K}^3$ and each edge intersects $\partial \mathbb{K}^3$. Let $C_i$ be the cone with the apex $v_i$ tangent to $\partial \mathbb{K}^3$ and $\pi_i$ be the half-space not containing $v_i$ such that $\partial \pi_i \cap \partial \mathbb{K}^3 = C_i \cap \partial \mathbb{K}^3$. Then, a hyper-ideal hyperbolic tetrahedron is given by $P := \mathscr{P}\cap \bigcap_i\pi_i;$ see Figure ~\ref{fig:k}.
\begin{figure}[htbp]
\begin{center}
\includegraphics[width=0.6\linewidth]{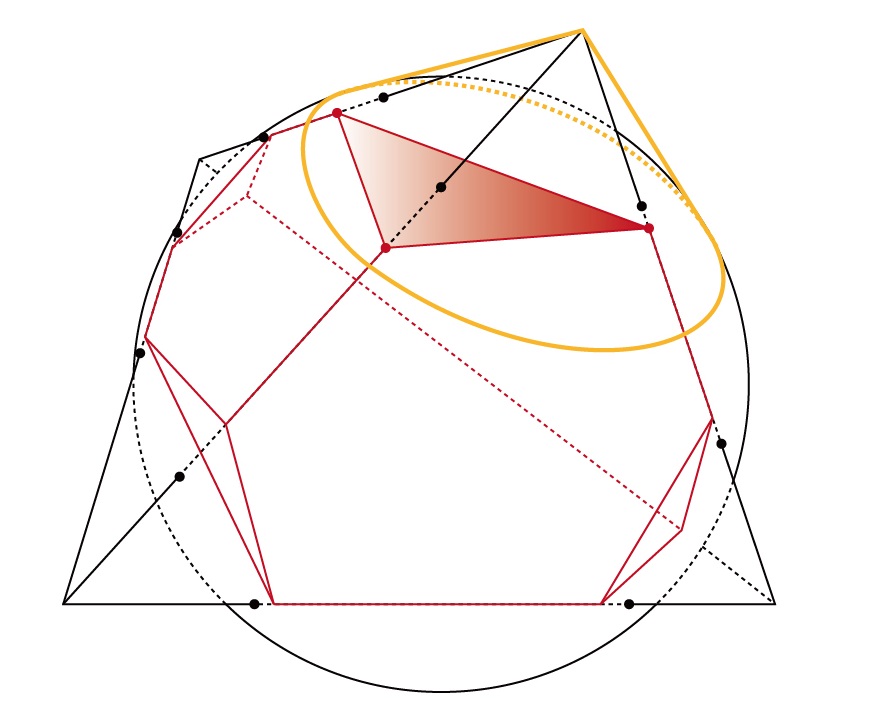}
\caption{\small {A hyper-ideal tetrahedron in the Klein model}}
\label{fig:k}
\end{center}
\end{figure}

Next, we recall some important results about the parameterization of the set of hyper-ideal tetrahedra and its extension for the set of generalized hyper-ideal tetrahedra; see \cite{[LY]} for details. Let $\{1,2,3,4\}$ be a combinatorial tetrahedron. The indices $i,j,k,h\in\{1,2,3,4\}$ are always considered to be distinct in this paper.  For a hyper-ideal tetrahedron $\sigma$ based on $\{1,2,3,4\},$ we denote by $\Delta_i$ ($e_{ij},$ $l_{ij}$ and $\alpha_{ij},$ resp.) a vertex triangle (an edge, the edge length, and a dihedral angle, resp.) as in the introduction, and by $H_{ijk}$ the hexagonal face adjacent to $e_{ij}, e_{ik}$ and $e_{jk}.$  The length of the vertex edge $\Delta_i \cap H_{ijk}$ is denoted by $x_{jk}^i$.
\begin{prop}[\cite{[BB],[F]}]
Let $\sigma$ be a hyper-ideal tetrahedron.
\begin{itemize}
\item The isometry class of $\sigma$ is determined by its dihedral angle vector $(\alpha_{12},\cdots,\alpha_{34})$ in $\mathbb{R}^6$, which satisfies the condition that $\alpha_{ij} > 0$ for any $\{i,j\}\subset\{1,2,3,4\}$, and $\sum_{j \neq i} \alpha_{ij} < \pi$ for any fixed $1\leq i\leq 4.$
\item Conversely, given any $(\alpha_{12}, \cdots, \alpha_{34}) \in \mathbb{R}^6_{>0}$ satisfying that $\sum_{j \neq i} \alpha_{ij} < \pi$ for each $1\leq i\leq 4,$ there exists a hyper-ideal tetrahedron whose dihedral angles are given by $\alpha_{ij}.$
\item The isometry class of $\sigma$ is also determined by its edge length vector $(l_{12}, \cdots, l_{34}) \in \mathbb{R}^6_{> 0}$.
\end{itemize}
\end{prop}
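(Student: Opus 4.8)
The plan is to reduce everything to the elementary geometry of the two kinds of faces of a hyper-ideal tetrahedron $\sigma$: the four right-angled hexagonal faces $H_{ijk}$ and the four vertex triangles $\Delta_i$. I will use two structural facts. First, a right-angled hyperbolic hexagon is determined up to isometry by the lengths of any alternating triple of its sides, and conversely every triple of positive reals occurs, via the right-angled hexagon cosine law. Second, since the geodesic plane carrying $\Delta_i$ is perpendicular to each of the three hexagonal faces through the vertex $i$, it cuts the dihedral wedge along $e_{ij}$ orthogonally, so the angle of $\Delta_i$ at its corner on $e_{ij}$ equals the dihedral angle $\alpha_{ij}$; hence $\Delta_i$ is a genuine hyperbolic triangle with angles $\alpha_{ij},\alpha_{ik},\alpha_{ih}$. (Equivalently one may encode the four face planes by their Gram matrix $G$, with $g_{aa}=1$ and $g_{ab}=-\cos\alpha_{ab}$, and read off the same information from its minors.) With this in hand I would treat, in order, necessity of the angle conditions, the two ``determined by'' assertions, and the realizability assertion.

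Necessity in the first bullet is then immediate: $\Delta_i$ being an honest hyperbolic triangle forces $\alpha_{ij}>0$ and $\sum_{j\ne i}\alpha_{ij}<\pi$ for each $i$. For the two ``determined by'' (rigidity) claims I would write down a choice-free reconstruction. From a dihedral angle vector: each $\Delta_i$ is the unique hyperbolic triangle with the prescribed angles, so its side lengths --- which are precisely the short sides of the three hexagons meeting $\Delta_i$ --- are determined, and then the first structural fact determines each $H_{ijk}$, in particular its long sides, i.e.\ all six edge lengths $l_{ij}$. From an edge length vector: each $H_{ijk}$ is determined directly by its long sides $l_{ij},l_{jk},l_{ki}$, hence its short sides, hence the vertex triangles and the angles $\alpha_{ij}$. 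In either case I would conclude by a rigid folding: place $H_{234}$ in a standard position; the plane of $H_{123}$ is then forced, as it must contain the geodesic carrying $e_{23}$ and make angle $\alpha_{23}$ with the plane of $H_{234}$ on the interior side, and likewise for $H_{124}$ and $H_{134}$, while the truncating planes carrying the $\Delta_i$ are the common perpendiculars of the appropriate triples of face planes. Since no free choices enter, $\sigma$ is reconstructed uniquely up to isometry; in particular the edge length vector determines $\sigma$ and parametrizes the isometry classes by the subset $\mathcal{L}\subset\R^6_{>0}$.

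The substance is the converse (second bullet), and I expect it to be the main obstacle: one cannot simply run the reconstruction backwards, since for a tuple $\alpha$ merely satisfying the inequalities the hexagons built from the reconstructed vertex triangles need not share a common long side along each $e_{ij}$, and even when they do the folding need not close up --- in edge-length coordinates the realizable locus is the non-convex set $\mathcal{L}$ of Proposition~\ref{REAL}, so there is no explicit description of the obstruction and one argues globally, following \cite{[BB]}. Set $\mathcal{A}=\{\alpha\in\R^6_{>0}:\ \sum_{j\ne i}\alpha_{ij}<\pi\ \text{ for }i=1,2,3,4\}$, an open, convex, hence connected subset of $\R^6$, and let $\Phi\colon\mathcal{L}\to\R^6$ send $l$ to the dihedral angle vector of the tetrahedron it determines. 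By necessity $\Phi(\mathcal{L})\subseteq\mathcal{A}$; by rigidity $\Phi$ is injective; and since $\mathcal{L}$ is open in $\R^6$ and $\Phi$ is continuous, invariance of domain makes $\Phi$ a homeomorphism onto an open subset of $\R^6$. The remaining --- and technically central --- point is that $\Phi(\mathcal{L})$ is also closed in $\mathcal{A}$: if $\alpha^{(n)}=\Phi(\sigma_n)\to\alpha^\infty\in\mathcal{A}$, then the angle sums of the vertex triangles $\Delta_i(\sigma_n)$ stay bounded away from $\pi$, so these triangles converge to nondegenerate hyperbolic triangles, whence the hexagon short sides, the hexagons, and the edge lengths $l_{ij}(\sigma_n)$ all converge in $\R^6_{>0}$; since every piece of metric data converges to a nondegenerate limit --- no vertex can become ideal (which would force $\sum_{j\ne i}\alpha^\infty_{ij}=\pi$) or otherwise degenerate --- the $\sigma_n$ converge to a hyper-ideal tetrahedron $\sigma^\infty\in\mathcal{L}$ with $\Phi(\sigma^\infty)=\alpha^\infty$. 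Thus $\Phi(\mathcal{L})$ is open and closed in the connected set $\mathcal{A}$, and nonempty since hyper-ideal tetrahedra exist (e.g.\ by the Klein-model construction recalled above, applied to a suitable regular Euclidean tetrahedron), so $\Phi(\mathcal{L})=\mathcal{A}$, which is the second bullet; together with the reconstruction from edge lengths, the resulting bijective homeomorphism $\Phi$ re-packages the other two assertions.
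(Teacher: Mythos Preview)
The paper does not give its own proof of this proposition: it is stated as a cited result from \cite{[BB]} and \cite{[F]} and used as background. So there is nothing in the paper to compare against directly.

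Your proposal is essentially the Bao--Bonahon argument and is sound in outline. The identification of the vertex-triangle angles with the dihedral angles, the reconstruction of the hexagons from alternating triples of side lengths, and the open-plus-closed-in-a-connected-target argument via invariance of domain are exactly the right ingredients. Two small points are worth tightening. First, in the closedness step you should make explicit that convergence of the three short sides of each right-angled hexagon to \emph{positive} values forces convergence of the three long sides to finite positive values; this follows from the right-angled hexagon law, but it is the step that rules out edge lengths drifting to $0$ or $\infty$ and deserves a line. Second, the ``rigid folding'' paragraph is doing real work (it is what upgrades ``all face data determined'' to ``$\sigma$ determined up to isometry''), and you should note why the two possible sides for attaching each subsequent hexagonal plane are distinguished by orientation, so that no discrete ambiguity survives. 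With those clarifications the argument is complete and matches the approach of the references the paper cites.
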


Thus, the set of isometry classes of hyper-ideal tetrahedra parameterized by dihedral angles is the open convex polytope in $\mathbb{R}^6,$
\beq
\mathscr{B} = \big\{ (\alpha_{12}, \cdots, \alpha_{34}) \in \mathbb{R}^6_{>0}\big|\sum_{j \neq i} \alpha_{ij} < \pi, \  \text{for all}\ 1\leq i\leq 4\big\}.
\eeq

Let $vol(\cdot)$ denote the hyperbolic volume of a hyper-ideal tetrahedron. {Due to Casson-Rivin's angle structure theory (see for example \cite{[Riv3],[Riv2],[Luo3],[Riv1],[GF],[HR],[Luo1]}), it can be naturally regarded as a function} $vol: \mathscr{B} \to \mathbb{R}$, and satisfies the Schl\"afli formula {(see \cite{[Bon]} for more general setting)},
\[\frac{\partial vol}{\partial  \alpha_{ij}} = -\frac{l_{ij}}{2}.\] Some other properties of this volume function can be found in \cite{[Riv1],[Sch]}.

Let $\mathcal{L}$ denote the set of vectors $(l_{12}, \cdots, l_{34})\subset\R^6_{>0}$ such that there exists a hyper-ideal tetrahedron having $l_{ij}$ as the length of the edge $e_{ij}$ for any $\{i,j\}\subset\{1,2,3,4\}.$ The volume $vol(\cdot)$ can be also regarded as a function on $\mathcal{L}.$ The Legendre transform of $vol$, called the co-volume functional, is given by
\[cov(l) = 2vol(l) + \sum_{i < j}\alpha_{ij}l_{ij}, \quad l\in\mathcal{L}.\]

\begin{prop}[Corollary~5 in \cite{[L]}]\label{prop:strictconvex}The functional $cov:\mathcal{L}\to \R$ is a smooth function, which has a positive definite Hessian matrix at each $l\in \mathcal{L},$ and hence it is locally strictly convex.
\end{prop}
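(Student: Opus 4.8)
The plan is to recognize $cov$ as the Legendre-dual potential of the dihedral-angle map and to reduce positive-definiteness of its Hessian to the strict concavity of the hyperbolic volume of a hyper-ideal tetrahedron as a function of its dihedral angles.

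First I would set up smoothness. By the parameterization results recalled above, assigning to a hyper-ideal tetrahedron its dihedral-angle vector is a bijection from $\mathcal{L}$ onto the open convex polytope $\mathscr{B}$, and (via the implicit function theorem applied to the defining equations, cf. \cite{[LY]}) this bijection and its inverse are smooth; in particular $\mathcal{L}$ is open and $l\mapsto(\alpha_{ij}(l))_{i<j}$ is smooth on $\mathcal{L}$. Since $vol$ is real-analytic on $\mathscr{B}$ (Casson--Rivin angle structure theory, with the Schl\"afli formula $\partial vol/\partial\alpha_{ij}=-l_{ij}/2$), the composite $vol(l)$ and hence $cov(l)=2vol(l)+\sum_{i<j}\alpha_{ij}(l)l_{ij}$ is smooth on $\mathcal{L}$.

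Next I would compute the gradient. Differentiating $cov$ in $l_{kh}$, using the chain rule and $\partial l_{ij}/\partial l_{kh}=\delta_{\{ij\},\{kh\}}$,
\[
\frac{\partial cov}{\partial l_{kh}}
= 2\sum_{i<j}\frac{\partial vol}{\partial\alpha_{ij}}\frac{\partial\alpha_{ij}}{\partial l_{kh}}
+ \sum_{i<j} l_{ij}\frac{\partial\alpha_{ij}}{\partial l_{kh}}
+ \alpha_{kh}
= \alpha_{kh},
\]
where the first two sums cancel by Schl\"afli's formula $\partial vol/\partial\alpha_{ij}=-l_{ij}/2$. Thus $cov$ is a potential for the angle map, and its Hessian at $l$ equals the Jacobian $J(l)=\big(\partial\alpha_{ij}/\partial l_{kh}\big)_{i<j,\,k<h}$; in particular $J(l)$ is symmetric.

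Finally I would invoke concavity of volume. Writing Schl\"afli as $l_{ij}=-2\,\partial vol/\partial\alpha_{ij}$, the Jacobian $\big(\partial l_{ij}/\partial\alpha_{kh}\big)$ equals $-2\,\mathrm{Hess}_\alpha\,vol$. It is a theorem of Rivin and Schlenker (see \cite{[Sch],[Riv1],[LY]}) that $vol$ is strictly concave on $\mathscr{B}$, so $\mathrm{Hess}_\alpha\,vol$ is negative definite at each point and $\big(\partial l_{ij}/\partial\alpha_{kh}\big)$ is symmetric positive definite; it is the inverse of $J(l)$ under the diffeomorphism $\alpha\leftrightarrow l$, hence $J(l)=\mathrm{Hess}_l\,cov$ is symmetric positive definite at every $l\in\mathcal{L}$, and local strict convexity of $cov$ follows. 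The only non-formal ingredient, and hence the main obstacle if one wanted a self-contained argument, is the strict concavity of the hyper-ideal tetrahedron volume in its dihedral angles; one would establish this by the standard second-variation computation of the volume (via the Gram matrix of the angles and Lobachevsky-type integrals, or a deformation argument showing the relevant angle Gram matrix stays nondegenerate). Everything else — smoothness of the parameterization, the chain-rule cancellation, and the matrix-inversion step — is routine bookkeeping.
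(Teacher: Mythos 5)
Your proposal is correct and follows essentially the same route as the result you are asked about: the paper itself gives no proof, quoting Corollary~5 of Luo \cite{[L]}, and the argument behind that citation is exactly your Legendre-duality scheme — the Schl\"afli cancellation giving $\partial cov/\partial l_{ij}=\alpha_{ij}$, and positive definiteness of $\mathrm{Hess}_l\,cov=(\partial\alpha_{ij}/\partial l_{kh})$ obtained by inverting the Jacobian $\partial l/\partial\alpha=-2\,\mathrm{Hess}_\alpha vol$. One small wording caveat: strict concavity of $vol$ on $\mathscr{B}$ does not by itself imply that $\mathrm{Hess}_\alpha vol$ is negative definite at every point (think of $-x^4$), so you should cite the stronger statement actually proved by Schlenker/Bao--Bonahon and used by Luo, namely the non-degenerate negative definiteness of the second derivative of the volume in the dihedral angles; your closing remark about the second-variation/Gram-matrix computation shows you have the right ingredient in mind, it just needs to be what you invoke rather than concavity alone.
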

Since $\mathcal{L}$ is not a convex subset of $\R^6,$ $cov$ may not be globally convex. It is useful to extend the co-volume functional to a $C^1$-smooth and convex function on $\mathbb{R}^6_{\ge 0}$ (or $\mathbb{R}^6);$ see \cite{[LY]}.

Firstly, the following are the formulae of dihedral angles in terms of the edge length vector in $\mathcal{L};$ see \cite[Proposition~3.1]{[Luo3]}} and \cite[Lemma~4.3]{[LY]}.
\begin{lem}\label{l1}
For $l=(l_{12}, \cdots, l_{34}) \in \mathcal{L}$ and $\{i, j, k, h\} = \{1, 2, 3, 4\}$, let $l_{ij} = l_{ji}$ for $i \neq j.$ Set
\beq \label{x}
x_{jk}^i = \arccosh\big ( \frac{\cosh l_{ij}\cosh l_{ik} + \cosh l_{jk}}{\sinh l_{ij} \sinh l_{ik}}\big )
\eeq
and
\beq \label{phi}
\phi_{kh}^i = \frac{\cosh x_{jk}^i \cosh x_{jh}^i - \cosh x_{kh}^i}{\sinh x_{jk}^i \sinh x_{jh}^i}.
\eeq
Then $\phi_{kh}^i(l) = \phi_{kh}^j(l).$ Define the function $\phi_{ij}: \mathcal{L} \to \mathbb{R}$ by $\phi_{ij}(l) = \phi_{kh}^i(l)$. Then $\cos\alpha_{ij}=\phi_{ij}$ and
\beq \label{angle}
\phi_{ij} = \frac{c_{ik}c_{ih} + c_{jk}c_{jh} + c_{ij}c_{ik}c_{jh} + c_{ij}c_{ih}c_{jk} -s_{ij}^2c_{kh}}{\sqrt{2c_{ij}c_{ik}c_{jk} + c_{ij}^2 + c_{ik}^2 + c_{jk}^2 - 1}\sqrt{2c_{ij}c_{ih}c_{jh} + c_{ij}^2 + c_{ih}^2 + c_{jh}^2 - 1}},
\eeq
where $c_{ij} = \cosh l_{ij}$, $s_{ij} = \sinh l_{ij}$.
\end{lem}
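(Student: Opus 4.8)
The plan is to derive everything from the hyperbolic trigonometry of the genuine truncated tetrahedron $\sigma$ corresponding to $l\in\mathcal{L}$ (which exists and is nondegenerate by the parametrization recalled above from \cite{[BB],[F]}), using only the law of cosines for right-angled hyperbolic hexagons and for hyperbolic triangles. Two structural observations organize the computation. First, each hexagonal face $H_{ijk}$ is a right-angled hexagon whose six sides, read cyclically, are the edges $e_{ij},e_{jk},e_{ik}$ (of lengths $l_{ij},l_{jk},l_{ik}$) alternating with the three vertex edges $\Delta_j\cap H_{ijk},\ \Delta_k\cap H_{ijk},\ \Delta_i\cap H_{ijk}$; in particular $\Delta_i\cap H_{ijk}$ is adjacent to $e_{ij}$ and $e_{ik}$ and opposite to $e_{jk}$. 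Second, each vertex triangle $\Delta_i$ is a genuine hyperbolic triangle whose three sides are $\Delta_i\cap H_{ijk},\ \Delta_i\cap H_{ijh},\ \Delta_i\cap H_{ikh}$, and the edge $e_{ij}$ meets $\Delta_i$ (and $\Delta_j$) orthogonally, so that the dihedral angle $\alpha_{ij}$ between $H_{ijk}$ and $H_{ijh}$ equals the interior angle of $\Delta_i$ at its corner on $e_{ij}$, and likewise the interior angle of $\Delta_j$ at its corner on $e_{ij}$.

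First I would establish \eqref{x}. Applying the right-angled hexagon law of cosines in $H_{ijk}$ — for a side $t$ with neighbours $p,q$ and opposite side $r$ one has $\cosh t\,\sinh p\,\sinh q=\cosh p\,\cosh q+\cosh r$ — to the side $\Delta_i\cap H_{ijk}$ with neighbours $e_{ij},e_{ik}$ and opposite side $e_{jk}$ yields
\[
\cosh\bigl(\Delta_i\cap H_{ijk}\bigr)=\frac{\cosh l_{ij}\cosh l_{ik}+\cosh l_{jk}}{\sinh l_{ij}\sinh l_{ik}},
\]
so $x^i_{jk}$ as defined by \eqref{x} is exactly the length of the vertex edge $\Delta_i\cap H_{ijk}$, which is simultaneously a side of the triangle $\Delta_i$. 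Next I would compute the dihedral angle: the corner of $\Delta_i$ on $e_{ij}$ is the one between the sides $\Delta_i\cap H_{ijk}$ and $\Delta_i\cap H_{ijh}$, of lengths $x^i_{jk}$ and $x^i_{jh}$, with opposite side $\Delta_i\cap H_{ikh}$ of length $x^i_{kh}$. The hyperbolic law of cosines in $\Delta_i$ then gives
\[
\cos\alpha_{ij}=\frac{\cosh x^i_{jk}\cosh x^i_{jh}-\cosh x^i_{kh}}{\sinh x^i_{jk}\sinh x^i_{jh}}=\phi^i_{kh},
\]
by the definition \eqref{phi}. Running the argument verbatim in $\Delta_j$ gives $\cos\alpha_{ij}=\phi^j_{kh}$; hence $\phi^i_{kh}=\phi^j_{kh}$, the common value $\phi_{ij}$ is well defined, and $\cos\alpha_{ij}=\phi_{ij}$.

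Finally I would substitute \eqref{x} into \eqref{phi} to obtain \eqref{angle}. With $c_{ij}=\cosh l_{ij}$, $s_{ij}=\sinh l_{ij}$, one has $\cosh x^i_{jk}=(c_{ij}c_{ik}+c_{jk})/(s_{ij}s_{ik})$ and, using $s_{ij}^2=c_{ij}^2-1$,
\[
\sinh x^i_{jk}=\frac{\sqrt{\,2c_{ij}c_{ik}c_{jk}+c_{ij}^2+c_{ik}^2+c_{jk}^2-1\,}}{s_{ij}s_{ik}},
\]
the radicand being $(c_{ij}c_{ik}+c_{jk})^2-s_{ij}^2s_{ik}^2=\bigl(s_{ij}s_{ik}\sinh x^i_{jk}\bigr)^2>0$ since $\sigma$ is nondegenerate. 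Plugging these into $\phi^i_{kh}=\bigl(\cosh x^i_{jk}\cosh x^i_{jh}-\cosh x^i_{kh}\bigr)/\bigl(\sinh x^i_{jk}\sinh x^i_{jh}\bigr)$, the common factor $1/(s_{ij}^2 s_{ik}s_{ih})$ cancels between numerator and denominator, and expanding $(c_{ij}c_{ik}+c_{jk})(c_{ij}c_{ih}+c_{jh})-s_{ij}^2(c_{ik}c_{ih}+c_{kh})$ with $s_{ij}^2=c_{ij}^2-1$ collapses the numerator to $c_{ik}c_{ih}+c_{jk}c_{jh}+c_{ij}c_{ik}c_{jh}+c_{ij}c_{ih}c_{jk}-s_{ij}^2c_{kh}$, which is exactly \eqref{angle}. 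This last step is a routine algebraic simplification; the only care needed is the bookkeeping in the cancellation and checking positivity of the radicands, which is automatic on $\mathcal{L}$. The one genuinely geometric ingredient — and hence the main point, if one wants a self-contained argument rather than a citation to \cite{[Luo3],[LY]} — is the orthogonality used above to identify $\alpha_{ij}$ with an angle of the vertex triangle; but this is part of the very definition of a hyper-ideal tetrahedron (hexagonal faces meet vertex triangles at right angles), so there is no serious obstacle.
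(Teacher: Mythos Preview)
Your argument is correct and is precisely the standard derivation: the hexagon law of cosines in $H_{ijk}$ identifies $x^i_{jk}$ as the length of the vertex edge $\Delta_i\cap H_{ijk}$, the triangle law of cosines in $\Delta_i$ then gives $\cos\alpha_{ij}=\phi^i_{kh}$ (the orthogonality of hexagonal faces to vertex triangles being exactly what identifies the dihedral angle with the interior angle of $\Delta_i$), and the algebraic substitution you outline is routine and checks out. The paper itself does not give a proof of this lemma but simply cites \cite{[Luo3]} and \cite{[LY]}, where the same computation appears; your write-up is a faithful reconstruction of that argument.
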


Note that for any $l = (l_{12}, \cdots, l_{34}) \in \mathcal{L}$ as the edge length vector of a hyper-ideal tetrahedron, $x_{jk}^i$ and $\arccos(\phi_{ij})=\alpha_{ij}$ are the length of the vertex edge $\Delta_i \cap H_{ijk}$ and the dihedral angle at $e_{ij}$ respectively. 

Since $\mathcal{L}$ is not convex in $\R^6_{>0},$ Luo and Yang \cite{[LY]} introduced the generalized hyper-ideal tetrahedra to extend the subset $\mathcal{L}$. A \emph{generalized hyper-ideal tetrahedron} is a topological truncated tetrahedron so that each edge $e_{ij}$ is assigned a positive number $l_{ij},$ called the edge length. The set of generalized hyper-ideal tetrahedra is parametrized by the edge length vector $l\in\R^6_{>0}.$
If $l\in \mathcal{L},$ then it corresponds to a (real) hyper-ideal tetrahedron. Otherwise, for $l\in \R^6_{>0}\setminus \mathcal{L},$ it corresponds to a degenerate
hyper-ideal tetrahedron. Moreover, if each edge $e_{ij}$ is assigned a nonnegative number $l_{ij}$, then we can use $\R^6_{\geq 0}$ to parametrize a larger class of generalized hyper-ideal tetrahedra, called \emph{generalized hyper-ideal tetrahedra in the wide sense}.

\begin{defi}\label{defi:dihe}Let $l\in \R^6_{\geq 0}$ be a generalized hyper-ideal tetrahedron in the wide sense. For any $\{i,j\}\subset \{1,2,3,4\},$ we use the equation \eqref{angle} to define $\phi_{ij},$ and 
define $$\alpha_{ij} = \arccos (-1 \vee \phi_{ij} \wedge1).$$ We call $\alpha_{ij}$ the \emph{extended dihedral angle} at the edge $e_{ij}.$
\end{defi}
The extended dihedral angle $\alpha_{ij}$ was written as $\wt{\alpha}_{ij}$ in the introduction to be distinguished from the usual dihedral angle. In the rest of the paper, for simplicity we call it the dihedral angle if it does not cause any confusion in the context. Note that $\phi_{ij}$ and $\alpha_{ij}$ are continuous functions on $\mathbb{R}_{\ge 0}^6.$ Moreover, for a hyper-ideal tetrahedron, $\alpha_{ij}$ equals to the usual dihedral angle at the edge $e_{ij}.$

We introduce a special class of generalized hyper-ideal tetrahedra. A \emph{flat hyper-ideal tetrahedron} is defined as follows; see {Figure~\ref{fig:flat}.} Take a right-angled hyperbolic octagon $Q$ with eight edges cyclically labelled as $\Delta_1,e_{12}, \Delta_2,e_{23},\Delta_3,e_{34},\Delta_4,e_{41}.$ Let $e_{13}$ (and $e_{24}$) be the shortest geodesic arc in $Q$ joining $\Delta_1$ to $\Delta_3$ (and $\Delta_2$ and $\Delta_4$). We call $(Q,\{e_{ij}\})$ a flat hyper-ideal tetrahedron with six edges $e_{ij}$.

\begin{figure}[htbp]
\begin{center}
\includegraphics[width=0.5\linewidth]{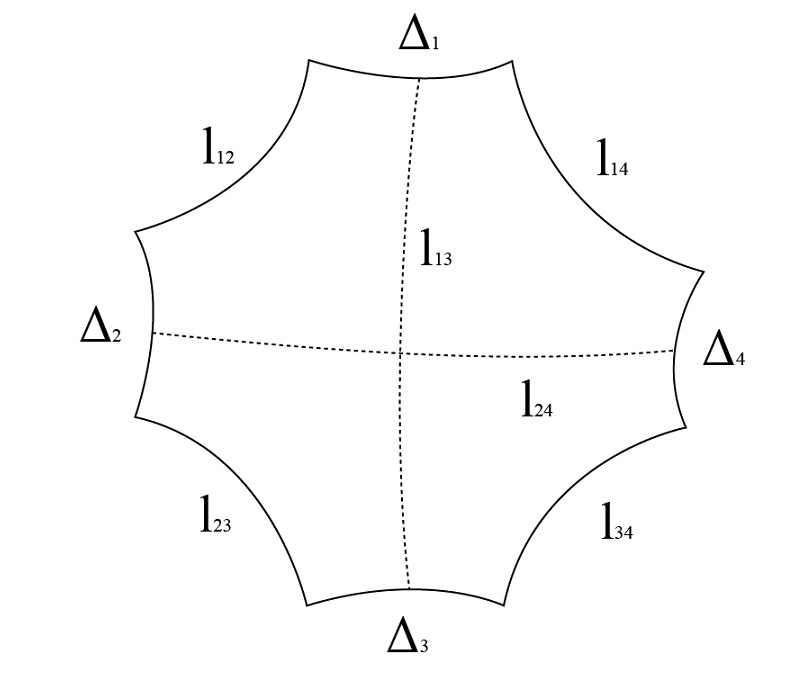}
\caption{\small {Flat hyper-ideal tetrahedron}}
\label{fig:flat}
\end{center}
\end{figure}
In particular, the edge lengths $l_{ij}$ and the dihedral angles $\alpha_{ij}$ are well-defined. The dihedral angles at $e_{13}$ and $e_{24}$ are $\pi$ and are $0$ for all other edges. Note that some configurations of degenerate hyper-ideal tetrahedra in $\R^6_{>0}-\mathcal{L}$ are in fact flat hyper-ideal tetrahedra. 


In $\R^6_{\geq 0},$ the set $\mathcal{L}$ can be characterized by the range of $\phi_{ij}.$
\begin{prop}[Proposition~4.4 and Lemma~4.7 in \cite{[LY]}]\label{REAL}
The set of all hyper-ideal tetrahedra parametrized by the edge lengths is
\[\mathcal{L} = \{l \in \mathbb{R}_{\geq 0}^6| \phi_{ij}(l) \in (-1, 1) \ \text{for all} \ \{i,j\} \subset \{1, 2, 3, 4\}\}.\]
\end{prop}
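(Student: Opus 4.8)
The plan is to prove Proposition~\ref{REAL}, namely that
$\mathcal{L} = \{l \in \mathbb{R}_{\geq 0}^6 \mid \phi_{ij}(l) \in (-1,1) \text{ for all } \{i,j\}\subset\{1,2,3,4\}\}$.
Since the statement is quoted from \cite{[LY]}, I would regard it as a result to be cited rather than reproved; but if a proof is expected, here is how I would organize it.

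\textbf{Forward inclusion.} First I would show that if $l\in\mathcal{L}$, i.e. $l$ is the edge length vector of an honest hyper-ideal tetrahedron $\sigma$, then $\phi_{ij}(l)\in(-1,1)$ for every pair. This is essentially the content of Lemma~\ref{l1}: for a genuine hyper-ideal tetrahedron, $\phi_{ij}(l) = \cos\alpha_{ij}$ where $\alpha_{ij}$ is the dihedral angle at $e_{ij}$. So it suffices to argue that every dihedral angle of a hyper-ideal tetrahedron lies strictly in $(0,\pi)$. Strict positivity follows because the two hexagonal faces meeting at $e_{ij}$ are distinct totally geodesic planes meeting along a geodesic of positive length, hence make a genuinely positive angle; $\alpha_{ij}<\pi$ follows from convexity of $\sigma$ (the polyhedron lies locally on one side of each face, and non-degeneracy of the truncated-tetrahedron combinatorics prevents the angle from reaching $\pi$). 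One can also see this directly from the parametrization by dihedral angles: the angle region $\mathscr{B}$ is the open polytope $\{\alpha_{ij}>0,\ \sum_{j\ne i}\alpha_{ij}<\pi\}$, so in particular each $\alpha_{ij}\in(0,\pi)$ and thus $\cos\alpha_{ij}\in(-1,1)$.

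\textbf{Reverse inclusion.} Conversely, suppose $l\in\mathbb{R}_{\geq 0}^6$ satisfies $\phi_{ij}(l)\in(-1,1)$ for all six pairs; I must build a hyper-ideal tetrahedron with these edge lengths. The strategy is to reconstruct $\sigma$ face by face. The condition $\phi_{ij}\in(-1,1)$ lets us define honest angles $\alpha_{ij}=\arccos\phi_{ij}\in(0,\pi)$. Using formula \eqref{x}, the quantities $x_{jk}^i$ and hence the vertex triangles $\Delta_i$ are well-defined once one checks the arccosh argument exceeds $1$; then \eqref{phi} and the identity $\phi_{kh}^i(l)=\phi_{kh}^j(l)$ guarantee that the vertex-triangle data glue consistently across the hexagonal faces. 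One then checks that each of the four hexagonal faces is realized by a right-angled hyperbolic hexagon with the prescribed alternate side lengths $l_{ij}, l_{ik}, l_{jk}$ (this uses the standard existence/uniqueness of right-angled hexagons, given that the three prescribed sides are positive), and that the four hexagons and four triangles assemble into a convex polyhedron in $\mathbb{H}^3$. Here I would invoke the Klein-model description recalled after Figure~\ref{fig:k} (following \cite{[CGV]}): realizing $\sigma$ amounts to positioning a Euclidean tetrahedron $\mathscr{P}$ with all vertices outside $\mathbb{K}^3$ and all edges crossing $\partial\mathbb{K}^3$, and the strict inequalities $\phi_{ij}\in(-1,1)$ are exactly what is needed to keep the truncating half-spaces $\pi_i$ in general position so that $P=\mathscr{P}\cap\bigcap_i\pi_i$ is a genuine (non-degenerate, non-flat) truncated tetrahedron. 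The boundary cases $\phi_{ij}=\pm1$ correspond precisely to the degenerate/flat hyper-ideal tetrahedra described around Figure~\ref{fig:flat}, which is why they are excluded.

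\textbf{Main obstacle.} The delicate point is the reverse direction: verifying that the locally-consistent face data actually close up to a \emph{convex} polyhedron in $\mathbb{H}^3$, rather than an immersed or degenerate object. The consistency identity $\phi_{kh}^i=\phi_{kh}^j$ handles the combinatorial gluing, but one still needs a global argument — either a continuity/connectedness argument showing $\{\phi_{ij}\in(-1,1)\}$ is connected and contains $\mathcal{L}$ with no extra components, or the explicit Klein-model construction showing the half-space configuration is admissible. I expect the cleanest route is the latter, tracking how the sign and magnitude of $\phi_{ij}$ control the mutual position of the cones $C_i$ and half-spaces $\pi_i$; this is the technical heart and is carried out in \cite{[LY]}, to which I would ultimately defer for the full details.
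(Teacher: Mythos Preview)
Your instinct is correct: the paper does not prove this proposition at all, but simply cites it as Proposition~4.4 and Lemma~4.7 of \cite{[LY]}. Your proposal explicitly recognizes this and then offers a reasonable sketch of how one would organize an independent proof, so there is nothing to compare against on the paper's side.
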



\subsection{Co-volume of a generalized hyper-ideal tetrahedron}
{The volume is naturally defined as a function of dihedral angles. It is useful to consider a co-volume function, which is a function of edge lengths and can be realized by the Legendre-Fenchel dual of the volume function. The co-volume function is quite amazing (especially in the ideal tetrahedron setting), which has the form dates back to \cite{[Col],[CKP],[BPS]}. A concrete calculation of volume can be seen in \cite{[U]}. Recall that} Luo and Yang \cite{[LY]} extended  the co-volume functional $cov:\mathcal{L}\to \R$ to the set of generalized hyper-ideal tetrahedra.

For the co-volume functional $cov:\mathcal{L}\to \R,$ by the Schl\"afli formula,
\[\frac{\partial cov}{\partial l_{ij}} = \alpha_{ij}\]
for $i \neq j$, where $\alpha_{ij}: \mathcal{L} \to \mathbb{R}$ is the dihedral angle function at the edge $e_{ij}.$ In particular, the differential 1-form $\omega = \sum_{i < j} \alpha_{ij} dl_{ij} = d cov$ is a closed form in $\mathcal{L}$, and co-volume can be recovered via the integration $cov(l) = \int^l \omega;$ see \cite{[L]}.

For each $l =(l_{12}, \dots, l_{34}) \in \mathbb{R}^6$, let $l^+ := (l_{12}^+, \dots, l_{34}^+) \in \mathbb{R}_{\ge 0}^6$ where $l_{ij}^+ = l_{ij}\vee 0$. Moreover, we can extend the function $\alpha_{ij}: \mathcal{L} \to \mathbb{R}$ to a continuous function $\alpha_{ij}: \mathbb{R}^6 \to \mathbb{R}$ by $$\alpha_{ij}(l) = \alpha_{ij}(l^+),$$ and call $\alpha(l) = (\alpha_{12}(l), \dots, \alpha_{34}(l))$ the dihedral angle vector of $l \in \mathbb{R}^6.$
This defines a new continuous 1-form $\mu$ on $\mathbb{R}^6$ by
\begin{equation}\label{eq:mu1}\mu(l) = \sum_{i \neq j} \alpha_{ij}(l)dl_{ij}.\end{equation}

\begin{prop}[Proposition~4.10 in \cite{[LY]}]\label{form}
The continuous differential 1-form $\mu(l) = \sum_{i \neq j} \alpha_{ij}(l)dl_{ij}$ is closed in $\mathbb{R}^6$, that is, for any Euclidean triangle $\Delta$ in $\mathbb{R}^6$, $\int_{\partial \Delta} \mu = 0$.
\end{prop}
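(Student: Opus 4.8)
The goal is to show that the continuous $1$-form $\mu(l) = \sum_{i \neq j}\alpha_{ij}(l)\,dl_{ij}$ on $\mathbb{R}^6$ is closed, i.e. $\int_{\partial\Delta}\mu = 0$ for every Euclidean triangle $\Delta \subset \mathbb{R}^6$. The plan is to reduce the closedness of $\mu$ on all of $\mathbb{R}^6$ to the already-known closedness of $\omega = dcov$ on the open set $\mathcal{L}$, together with a density/continuity argument across the boundary strata where some $l_{ij} \le 0$ or where $l$ leaves $\mathcal{L}$.

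First I would record the structural observations. By Definition~\ref{defi:dihe} and the extension $\alpha_{ij}(l) = \alpha_{ij}(l^+)$, each $\alpha_{ij}$ is a continuous function on $\mathbb{R}^6$, so $\mu$ is a well-defined continuous $1$-form and the integral $\int_{\partial\Delta}\mu$ makes sense for every (say, piecewise-linear) triangle. Since being closed is a local statement for continuous forms — a continuous $1$-form $\mu$ on $\mathbb{R}^6$ is exact, hence has vanishing integral over every loop, as soon as $\int_{\partial\Delta}\mu = 0$ for all sufficiently small triangles — and since $\int_{\partial\Delta}\mu$ depends continuously on the vertices of $\Delta$, it suffices to prove $\int_{\partial\Delta}\mu = 0$ for triangles whose closure lies in an open dense subset of $\mathbb{R}^6$ on which we have good control; the general case then follows by approximating an arbitrary triangle by triangles in that dense set and passing to the limit under the integral sign (dominated convergence, using continuity and local boundedness of the $\alpha_{ij}$).

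The candidate dense open set is $\mathcal{L} \cap \mathbb{R}^6_{>0}$. On $\mathbb{R}^6_{>0}$ we have $l^+ = l$, so $\alpha_{ij}(l)$ there agrees with the dihedral-angle function of Definition~\ref{defi:dihe}; and on the further subset $\mathcal{L}$, Proposition~\ref{REAL} tells us these $\alpha_{ij} = \arccos(\phi_{ij})$ are exactly the honest dihedral angles of a genuine hyper-ideal tetrahedron, for which the Schläfli formula gives $\partial cov/\partial l_{ij} = \alpha_{ij}$, hence $\mu|_{\mathcal{L}} = \omega = dcov$ is closed (exact) on $\mathcal{L}$. So for any triangle $\Delta$ with $\overline{\Delta}\subset \mathcal{L}\cap\mathbb{R}^6_{>0}$ — in particular $\Delta$ bounds a disk inside the (possibly non-convex, but for small triangles locally convex) open set $\mathcal{L}$ — Stokes/Green gives $\int_{\partial\Delta}\mu = \iint_\Delta d\mu = 0$. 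The remaining point is that $\mathcal{L}\cap\mathbb{R}^6_{>0}$ is open and dense in $\mathbb{R}^6$: openness is clear since $\mathcal{L}$ is cut out by the open conditions $\phi_{ij}\in(-1,1)$ (Proposition~\ref{REAL}) and each $\phi_{ij}$ is continuous; density requires knowing that the complement (where some coordinate is $\le 0$, or some $\phi_{ij}(l^+)\in\{-1,1\}$, i.e. a degenerate or flat hyper-ideal configuration) is nowhere dense, which follows because $\{\phi_{ij} = \pm 1\}$ is a proper real-analytic subvariety of $\mathbb{R}^6_{>0}$ (the $\phi_{ij}$ of \eqref{angle} are non-constant real-analytic in the $c_{ij}$).

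\textbf{Main obstacle.} The delicate part is passing to the limit across the singular strata: given an arbitrary triangle $\Delta$ in $\mathbb{R}^6$, I approximate it by triangles $\Delta_\varepsilon$ with $\overline{\Delta_\varepsilon}\subset \mathcal{L}\cap\mathbb{R}^6_{>0}$, for each of which $\int_{\partial\Delta_\varepsilon}\mu = 0$, and then let $\varepsilon\to 0$. Justifying $\int_{\partial\Delta_\varepsilon}\mu \to \int_{\partial\Delta}\mu$ needs the integrand to be controlled uniformly — which it is, since the $\alpha_{ij}$ are continuous on all of $\mathbb{R}^6$ hence bounded on a neighborhood of $\overline{\Delta}$, so dominated convergence along the boundary curves applies. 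A subtlety worth checking is that one cannot always move a large $\Delta$ entirely into $\mathcal{L}$ (which is not convex), so the reduction to small triangles via a Vitali/partition-of-the-triangle argument must genuinely be invoked first: subdivide $\partial\Delta = \sum_k \partial\Delta_k$ into small triangles, perturb each $\Delta_k$ into $\mathcal{L}\cap\mathbb{R}^6_{>0}$, apply Stokes on each, sum, and let the perturbation go to zero. I expect this bookkeeping — small-triangle reduction, local convexity of $\mathcal{L}$, and the dominated-convergence limit — to be the only real work; everything else is quoting Proposition~\ref{REAL}, the Schläfli formula, and continuity of the extended angles.
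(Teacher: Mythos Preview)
The paper does not supply a proof of this proposition; it simply quotes it as Proposition~4.10 of \cite{[LY]}. So there is no ``paper's own proof'' to compare against, but your attempt can still be assessed on its own terms, and it contains a genuine gap.

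Your strategy is to show $\mu$ is closed on a dense open subset of $\mathbb{R}^6$ and then pass to the limit by continuity. The candidate dense open set is $\mathcal{L}\cap\mathbb{R}^6_{>0}$. However, this set is \emph{not} dense in $\mathbb{R}^6$; it is not even dense in $\mathbb{R}^6_{>0}$. You mis-identify the complement: you describe it as the locus where some $\phi_{ij}\in\{-1,1\}$, but by Proposition~\ref{REAL} the complement of $\mathcal{L}$ in $\mathbb{R}^6_{>0}$ is $\{l:\exists\,ij,\ |\phi_{ij}(l)|\ge 1\}$, and the region $\{|\phi_{ij}|>1\}$ is open and non-empty. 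For instance, in \eqref{PHI} the denominator does not involve $x_4$, while the numerator contains the term $(1-x_1^2)x_4$; sending $x_4\to\infty$ with $x_1>1$ fixed gives $\phi\to-\infty$, so there is an open region where $\phi_1<-1$. Thus the degenerate locus has non-empty interior, and a triangle crossing it cannot be perturbed into $\mathcal{L}$. The density/approximation argument cannot work as written.

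What the Luo--Yang argument actually requires is a separate analysis on the degenerate region $\mathbb{R}^6_{>0}\setminus\mathcal{L}$: one proves (this is a non-trivial lemma in \cite{[LY]}) that on that region every extended dihedral angle $\alpha_{ij}$ takes only the values $0$ or $\pi$, hence by continuity is locally constant, so $\mu$ is a constant-coefficient $1$-form there and in particular closed. One then glues the two closed pieces across the measure-zero real-analytic boundary $\partial\mathcal{L}$ using continuity --- this gluing step is where your limiting argument is legitimately deployed. The further extension from $\mathbb{R}^6_{\ge 0}$ to all of $\mathbb{R}^6$ via $l\mapsto l^+$ is yet another separate step. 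Your proposal conflates these pieces and, by assuming a density that fails, skips the substantive lemma about the structure of the degenerate region.
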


We define the functional $cov: \mathbb{R}^6 \to \mathbb{R}$ by the integral
\beq \label{cov}
cov(l) = \int_{(0, \dots, 0)}^l \mu + cov(0, \dots, 0),
\eeq where $cov(0, \dots, 0)=16\Lambda(\frac{\pi}{4})$, by the result of Ushijima \cite{[U]}. Here $\Lambda(a)=\int_0^a \ln|2\sin t|dt$ is the Lobachevsky function. Note that the functional $cov$ defined above extends the co-volume functional $cov:\mathcal{L}\to \R.$
\begin{prop}[Corollary~4.12 in \cite{[LY]}]\label{covx}
The functional $cov: \mathbb{R}^6 \to \mathbb{R}$ is a $C^1$-smooth convex function.
\end{prop}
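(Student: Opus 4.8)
The plan is to treat the two assertions separately. The $C^1$ regularity is essentially built into the construction: by Proposition~\ref{form} the $1$-form $\mu=\sum_{i\neq j}\alpha_{ij}(l)\,dl_{ij}$ is continuous and closed on $\mathbb{R}^6$, so the integral in \eqref{cov} is path-independent and defines $cov$ unambiguously, and differentiation along the coordinate axes gives $\partial cov/\partial l_{ij}=\alpha_{ij}(l)$. Each $\alpha_{ij}$ is continuous on $\mathbb{R}^6$, being the composition of the rational function $\phi_{ij}$ of \eqref{angle}, the continuous clamp $\arccos(-1\vee\,\cdot\,\wedge 1)$, and the map $l\mapsto l^+$; hence $\nabla cov=\alpha$ is continuous and $cov\in C^1(\mathbb{R}^6)$.

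For convexity I would reduce to a local statement. A $C^1$ function on $\mathbb{R}^6$ is convex iff its restriction to every affine line is convex, and $g(t)=cov(l_0+tv)$ is convex iff $g'(t)=\langle\alpha(l_0+tv),v\rangle$ is non-decreasing; since $g'$ is continuous and monotonicity of a continuous function on an interval is a local-to-global property, it suffices to prove that $cov$ is convex in a neighbourhood of each $l\in\mathbb{R}^6$. When $l^+\in\mathcal{L}$ — equivalently all $\phi_{ij}(l)\in(-1,1)$, by Proposition~\ref{REAL} — the $\alpha_{ij}$ are smooth near $l$, so $cov$ is $C^2$ there with Hessian equal to the Jacobian of $\alpha$, and this is positive definite by Proposition~\ref{prop:strictconvex} (if some coordinate of $l$ is negative, $cov$ is merely affine in it, since $\nabla cov(l)=\alpha(l^+)$ does not depend on it, which is harmless). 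When some $\phi_{ij}(l)$ lies strictly outside $[-1,1]$, continuity forces $\alpha_{ij}\equiv 0$ or $\equiv\pi$ near $l$, so $cov$ is affine in $l_{ij}$ there and the question drops to a lower-dimensional slice.

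The remaining case is $l$ on the degeneration locus $\partial\mathcal{L}$ (where some $\phi_{ij}(l)=\pm1$) or on $\partial\mathbb{R}^6_{\ge 0}$, and this is the main obstacle: there $\mathrm{Hess}\,cov$ need not exist, so the local Hessian argument fails. The way through, following \cite[\S4]{[LY]}, is to exhibit $cov$ as a Legendre--Fenchel conjugate: the hyperbolic volume, as a function of the dihedral angle vector, extends to a continuous concave function $\mathcal{V}$ on the closed angle polytope $\overline{\mathscr{B}}$, and the Schl\"afli formula yields
\[
cov(l)=\sup_{\alpha\in\overline{\mathscr{B}}}\big(\langle l,\alpha\rangle+2\mathcal{V}(\alpha)\big),\qquad l\in\mathbb{R}^6,
\]
the supremum being attained at $\alpha=\alpha(l)$ — an interior maximiser when $l^+\in\mathcal{L}$, a degenerate tetrahedron on $\partial\overline{\mathscr{B}}$ otherwise. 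A pointwise supremum of affine functions of $l$ is convex, which together with the $C^1$ regularity established above proves the proposition. The extension of the volume to $\overline{\mathscr{B}}$ and the identification of the conjugate are the technical heart; they are carried out in \cite{[LY]}, drawing on the volume computations of \cite{[U]}, and may be quoted here.
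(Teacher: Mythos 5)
The paper itself offers no argument for this proposition: it is imported verbatim from Luo--Yang (\cite{[LY]}, Corollary~4.12), together with the closedness statement Proposition~\ref{form}. Your first half, the $C^1$-regularity, is fine and is the standard argument: closedness of $\mu$ gives a well-defined potential with $\nabla cov=\alpha$, and $\alpha$ is continuous on $\mathbb{R}^6$.

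The convexity half, however, has a genuine gap. Your reduction to local convexity along lines is valid, but the local analysis only treats points where every $\phi_{ij}(l^+)$ is in $(-1,1)$ or strictly outside $[-1,1]$, and even there it is not complete: in your second case, writing $cov=c\,l_{ij}+F(y)$ locally only trades the problem for convexity of a five-variable function whose gradient consists of the remaining extended angles, which may themselves sit at degenerate configurations --- there is no previously established lower-dimensional theory to induct on, so "drops to a lower-dimensional slice" does not terminate. (Also, the parenthetical in your first case cannot occur: if some coordinate of $l$ is negative then $l^+_{ij}=0$, and \eqref{angle} gives $\phi_{ij}(l^+)=1$, so $l^+\notin\mathcal{L}$ by Proposition~\ref{REAL}; such points belong to the unresolved boundary case.) The decisive locus --- $\phi_{ij}=\pm1$ or vanishing coordinates --- is exactly where you switch to the Legendre--Fenchel representation $cov(l)=\sup_{\alpha\in\overline{\mathscr{B}}}\big(\langle l,\alpha\rangle+2\mathcal{V}(\alpha)\big)$, and that representation is asserted rather than proved: the continuous concave extension of the volume to $\overline{\mathscr{B}}$, the attainment of the supremum at the clamped angle vector (including for $l$ with negative or vanishing coordinates), and above all the identification of this supremum with the integral-defined $cov$ of \eqref{cov} on all of $\mathbb{R}^6$ are each nontrivial claims, and they are not available to quote in this form from what the paper cites --- what is quoted from \cite{[LY]} is precisely the proposition you are trying to prove, plus Proposition~\ref{form}. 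As written, the heart of your convexity argument is therefore circular with the citation: either supply a proof of the duality identity (or of convexity across the degeneration locus by some other means, e.g.\ a monotonicity argument for $t\mapsto\langle\alpha(l_0+tv),v\rangle$ across the strata), or simply cite \cite{[LY]} for the whole statement, as the paper does.
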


\section{Geometric properties for a generalized hyper-ideal tetrahedron}\label{sec:gat}


In this section, we derive some new estimates for geometric quantities of a generalized hyper-ideal tetrahedron, which will be crucial for our applications. Throughout the section, we only consider a single generalized hyper-ideal tetrahedron $\sigma$ with the edge length vector $(l_{12}, \cdots, l_{34})\in \R^6_{
\geq 0}.$

To simplify the notation, we write $$(e_1,e_2,e_3,e_4,e_5,e_6):=(e_{12},e_{13},e_{14},e_{34},e_{24},e_{23}),$$ where $e_{ij}$ are the edges of $\sigma.$ By the above correspondence, e.g. $\{1\}$ with $\{12\},$ we write the quantities on $e_{ij}$ such as $l_{ij}, \alpha_{ij}$ and $\phi_{ij}$ using the index $\{1,\cdots,6\},$ e.g. $l_1=l_{12},$ etc.; see Figure ~\ref{fig:hyperideal}. In the following, we set
$$\alpha:=\alpha_1=\alpha_{12},\quad \phi:=\phi_1=\phi_{12}.$$



\begin{figure}[htbp]
\begin{center}
\includegraphics[width=0.5\linewidth]{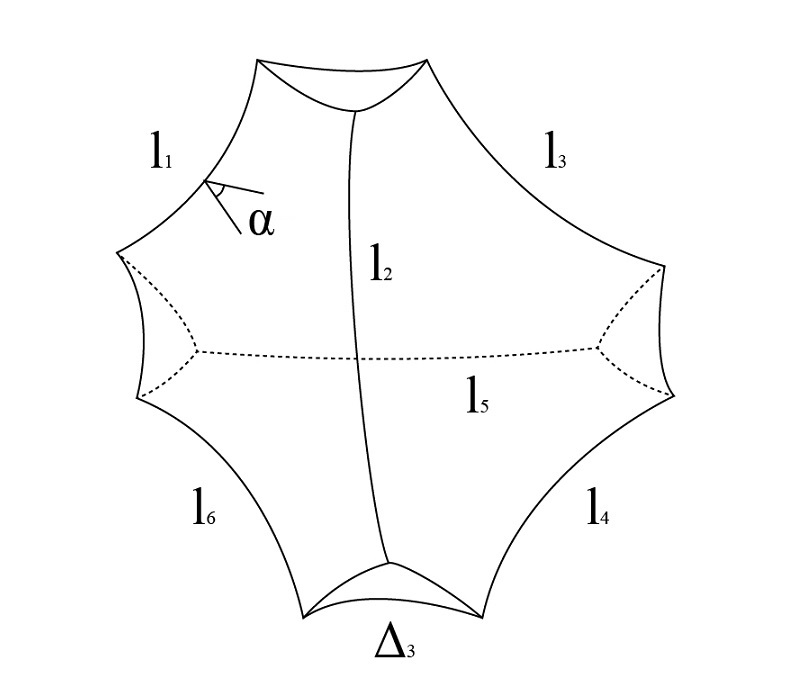}
\caption{\small {A hyper-ideal tetrahedron}}
\label{fig:hyperideal}
\end{center}
\end{figure}

By \eqref{angle},
\beq \label{PHI}
\phi =
\frac{x_2x_3 + x_5x_6 + x_1x_2x_5 + x_1x_3x_6 - x ^2_1x_4 + x_4}{\sqrt {2x_1x_2x_6 + x^2_1 + x^2_2 + x^2_6 - 1}\sqrt {2x_1x_3x_5 + x^2_1 + x^2_3 + x^2_5 - 1}},
\eeq
where $x_i = \cosh l_i$.

Note that for any $l\in \R^6_{\geq 0},$ $x\in \R^6_{\geq 1}:=[1,\infty)^6.$  Note that $\phi$ is monotonely decreasing in $x_4$ since $x_1\geq1.$
By taking partial derivatives of $\phi,$ we get

\begin{equation}\label{eq:pde}\left\{\begin{array}{c}
\frac{\partial \phi}{\partial x_2} = A_0[(x_1x_4 + x_2x_5 - x_3x_6)x_6 + x_3 + x_1x_5 +x_2x_4]\\
\frac{\partial \phi}{\partial x_5} = {A_1}[(x_1x_4 + x_2x_5 - x_3x_6)x_3 + x_6 + x_1x_2 +x_5x_4]\\
\frac{\partial \phi}{\partial x_3} = A_1[(x_1x_4 - x_2x_5 + x_3x_6)x_5 + x_2 + x_1x_6 +x_3x_4],\\
\frac{\partial \phi}{\partial x_6} = {A_0}[(x_1x_4 - x_2x_5 + x_3x_6)x_2 + x_5 + x_1x_3 +x_6x_4],
\end{array}\right.\end{equation}
where 
\[A_0 = ({x_1}^2 - 1)(2x_1x_2x_6 + x^2_1 + x^2_2 + x^2_6 - 1)^{-3/2}(2x_1x_3x_5 + x^2_1 + x^2_3 + x^2_5 - 1)^{-1/2} \geq 0,\]
\[A_1 = ({x_1}^2 - 1)(2x_1x_2x_6 + x^2_1 + x^2_2 + x^2_6 - 1)^{-1/2}(2x_1x_3x_5 + x^2_1 + x^2_3 + x^2_5 - 1)^{-3/2} \geq 0.\]

We derive some properties for the function $\phi.$ The function $\phi$ has the following symmetry. The proof is evident, and hence we omit it.
\begin{prop}\label{prop:sym} For any $x\in \R^6_{\geq 1},$
$$\phi(x)=\phi(x_1,x_3,x_2,x_4,x_6,x_5)=\phi(x_1,x_5,x_6,x_4,x_2,x_3).$$
\end{prop}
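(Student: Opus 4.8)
The statement to prove is Proposition~\ref{prop:sym}, asserting that the function $\phi(x)$ defined in \eqref{PHI} is invariant under the two permutations of the variables $(x_1,\dots,x_6)$ given by $(x_1,x_3,x_2,x_4,x_6,x_5)$ and $(x_1,x_5,x_6,x_4,x_2,x_3)$. The plan is to trace these two substitutions back to symmetries of the combinatorial tetrahedron $\{1,2,3,4\}$ itself, under the correspondence $(e_1,\dots,e_6)=(e_{12},e_{13},e_{14},e_{34},e_{24},e_{23})$ fixed just above the statement, and then to argue that the geometric quantity $\phi_{12}=\cos\alpha_{12}$ (the dihedral angle at $e_{12}$) can only depend on the combinatorial data of the tetrahedron up to such symmetries.

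First I would identify the two permutations explicitly as automorphisms of the edge set of the abstract tetrahedron. Swapping $x_2\leftrightarrow x_3$ and $x_5\leftrightarrow x_6$ (first relation) corresponds, under the dictionary, to swapping $e_{13}\leftrightarrow e_{14}$ and $e_{24}\leftrightarrow e_{23}$ while fixing $e_{12}$ and $e_{34}$; this is precisely the edge-action of the transposition $(3\,4)$ on vertices, which indeed fixes the edge $e_{12}$ and the edge $e_{34}$. Similarly, swapping $x_2\leftrightarrow x_5$ and $x_3\leftrightarrow x_6$ (second relation) corresponds to $e_{13}\leftrightarrow e_{24}$ and $e_{14}\leftrightarrow e_{23}$, fixing $e_{12}$ and $e_{34}$, which is the edge-action of the transposition $(1\,2)$, again fixing $e_{12}$. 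So both permutations lie in the subgroup of $S_4$ stabilizing the unordered pair $\{1,2\}$ (equivalently the edge $e_{12}$), hence must leave $\alpha_{12}$ — and therefore $\phi_{12}$ — unchanged.

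The cleanest way to make this rigorous, avoiding any appeal to geometry for the degenerate cases, is to verify the two identities directly at the level of the algebraic formula \eqref{PHI}. For the first identity, substitute $x_2\mapsto x_3$, $x_3\mapsto x_2$, $x_5\mapsto x_6$, $x_6\mapsto x_5$ into \eqref{PHI} and observe that the numerator $x_2x_3+x_5x_6+x_1x_2x_5+x_1x_3x_6-x_1^2x_4+x_4$ is sent to $x_3x_2+x_6x_5+x_1x_3x_6+x_1x_2x_5-x_1^2x_4+x_4$, which is the same; and the two radical factors $\sqrt{2x_1x_2x_6+x_1^2+x_2^2+x_6^2-1}$ and $\sqrt{2x_1x_3x_5+x_1^2+x_3^2+x_5^2-1}$ are interchanged, so their product is unchanged. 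For the second identity, substitute $x_2\mapsto x_5$, $x_5\mapsto x_2$, $x_3\mapsto x_6$, $x_6\mapsto x_3$: the numerator becomes $x_5x_6+x_2x_3+x_1x_5x_2+x_1x_6x_3-x_1^2x_4+x_4$, again unchanged, while the first radical becomes $\sqrt{2x_1x_5x_3+x_1^2+x_5^2+x_3^2-1}$ and the second becomes $\sqrt{2x_1x_2x_6+x_1^2+x_2^2+x_6^2-1}$, so their product is again unchanged. In both cases the overall expression is fixed, proving the claim.

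I do not expect a serious obstacle here: the proposition is, as the authors say, evident, and the only care needed is bookkeeping — correctly reading off which monomials in \eqref{PHI} are permuted and confirming that the symmetric combinations (the numerator, which is visibly symmetric in the relevant pairs, and the product of the two square roots, which get swapped) are preserved. If one wanted a conceptual rather than computational proof, the mild subtlety would be justifying invariance on all of $\R^6_{\geq 1}$ including the points corresponding to degenerate hyper-ideal tetrahedra, where the geometric interpretation of $\phi$ as $\cos\alpha_{12}$ is only formal; but since \eqref{PHI} is a single rational-in-radicals expression defined uniformly on $\R^6_{\geq 1}$, the direct substitution argument sidesteps this entirely.
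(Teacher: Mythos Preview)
Your proposal is correct. The paper itself omits the proof entirely, calling it ``evident''; your direct substitution into \eqref{PHI}, together with the optional identification of the permutations as the edge-actions of the vertex transpositions $(3\,4)$ and $(1\,2)$ fixing $e_{12}$, is exactly the evident verification the authors have in mind.
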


Moreover,  at least one of the pairs of $(\frac{\partial \phi}{\partial x_2},\frac{\partial \phi}{\partial x_5})$
and $(\frac{\partial \phi}{\partial x_3},\frac{\partial \phi}{\partial x_6})$ has the positivity property as follows.

\begin{prop}\label{prop:twop}  For any $x\in \R^6_{\geq 1}$ with $x_1>1,$ one of the following holds:
\begin{enumerate}[(i)]
\item $\frac{\partial \phi}{\partial x_2}>0$ and $\frac{\partial \phi}{\partial x_5}>0$;
\item $\frac{\partial \phi}{\partial x_3}>0$ and $\frac{\partial \phi}{\partial x_6}>0$.
\end{enumerate}
\end{prop}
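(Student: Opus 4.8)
The plan is to look at the four partial derivatives in \eqref{eq:pde} and observe that the sign of each is governed by a single bracketed factor, since the prefactors $A_0, A_1$ are strictly positive when $x_1 > 1$ and the remaining summands inside each bracket (terms like $x_3 + x_1 x_5 + x_2 x_4$) are all $\geq 1 > 0$. So the only way $\frac{\partial \phi}{\partial x_2}$ can fail to be positive is if $x_1 x_4 + x_2 x_5 - x_3 x_6$ is sufficiently negative, and similarly $\frac{\partial \phi}{\partial x_5}$ is controlled by the same quantity $x_1 x_4 + x_2 x_5 - x_3 x_6$, while $\frac{\partial \phi}{\partial x_3}$ and $\frac{\partial \phi}{\partial x_6}$ are both controlled by $x_1 x_4 - x_2 x_5 + x_3 x_6$. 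Set $P := x_1 x_4 + x_2 x_5 - x_3 x_6$ and $Q := x_1 x_4 - x_2 x_5 + x_3 x_6$. The key point is that $P + Q = 2 x_1 x_4 \geq 2 > 0$, so $P$ and $Q$ cannot both be $\le 0$; at least one is strictly positive.

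The argument then splits into the case $P > 0$ and the case $Q > 0$. Suppose $P > 0$. Then in the bracket for $\frac{\partial \phi}{\partial x_2}$, the term $(x_1 x_4 + x_2 x_5 - x_3 x_6) x_6 = P x_6$ is positive (as $x_6 \geq 1$), and adding the positive terms $x_3 + x_1 x_5 + x_2 x_4$ keeps it positive; multiplying by $A_0 > 0$ gives $\frac{\partial \phi}{\partial x_2} > 0$. The same reasoning with the bracket for $\frac{\partial \phi}{\partial x_5}$, namely $P x_3 + x_6 + x_1 x_2 + x_5 x_4 > 0$ times $A_1 > 0$, gives $\frac{\partial \phi}{\partial x_5} > 0$, so alternative (i) holds. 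If instead $Q > 0$, the identical computation on the brackets for $\frac{\partial \phi}{\partial x_3} = A_1 [Q x_5 + x_2 + x_1 x_6 + x_3 x_4]$ and $\frac{\partial \phi}{\partial x_6} = A_0 [Q x_2 + x_5 + x_1 x_3 + x_6 x_4]$ yields alternative (ii). Since at least one of $P > 0$, $Q > 0$ holds, we are done.

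Honestly, there is no real obstacle here: once one extracts the observation $P + Q = 2 x_1 x_4$ and notes $A_0, A_1 > 0$ under the hypothesis $x_1 > 1$, everything is a one-line sign check on each bracket. The only thing to be a little careful about is confirming that all the additive terms inside each of the four brackets are genuinely $\geq 0$ (indeed $\geq 1$) on $\R^6_{\geq 1}$, which is immediate since each is a sum of products of the $x_i \geq 1$; and that $A_0, A_1$ are strictly (not just weakly) positive, which needs $x_1^2 - 1 > 0$, i.e. the hypothesis $x_1 > 1$, together with the fact that the radicands $2 x_1 x_j x_k + x_1^2 + x_j^2 + x_k^2 - 1$ are strictly positive on $\R^6_{\geq 1}$ (they exceed $x_1^2 - 1 \ge 0$ plus positive terms). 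I would state these two elementary positivity facts as a preliminary sentence and then present the $P$–$Q$ dichotomy.
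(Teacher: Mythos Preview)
Your proposal is correct and follows essentially the same approach as the paper. The paper's proof is a one-liner: ``if $x_2x_5\geq x_3x_6$, then (i) holds; otherwise (ii) holds,'' which is exactly your $P$--$Q$ dichotomy phrased in terms of the sign of $x_2x_5 - x_3x_6$ rather than via $P+Q=2x_1x_4>0$; the underlying observation (that $A_0,A_1>0$ when $x_1>1$ and that the bracketed expressions are positive once the relevant $P$ or $Q$ is) is identical.
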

\begin{proof} By \eqref{eq:pde}, if $x_2x_5\geq x_3x_6,$ then the assertion $(i)$ holds. Otherwise, the assertion $(ii)$ holds.
\end{proof}

Next, we derive the monotonicity for the partial derivatives $\frac{\partial \phi}{\partial x_i},$ $i=\{2,3,5,6\}.$ For any such $i,$ we define the index $\hat{i},$ which is in pair of $i,$ as follows,
\begin{equation*}\hat{i}=\left\{\begin{array}{cc}
5, & i=2,\\
6, & i=3,\\
2, & i=5,\\
3, & i=6.
\end{array}\right.\end{equation*} For any $k\in \{1,\cdots, 6\},$ we denote by $m_k=(0,\cdots, 1,\cdots, 0)$ the $k$-th coordinate unit vector in $\R^6.$
\begin{prop}\label{prop:mon1} Let $x\in \R^6_{\geq 1}$ and $i\in \{2,3,5,6\}.$
\begin{enumerate}[(i)]
\item If $\frac{\partial \phi}{\partial x_i}(x)\geq 0,$ then for any $t,s\geq 0,$ $$\frac{\partial \phi}{\partial x_i}(x+tm_i+sm_{\hat{i}})\geq 0.$$ In particular,
$\phi(x)\leq \phi(y),$ for any $y$ satisfying $y_i\geq x_i, y_j=x_j$ ($\forall j\neq i$).
\item If $\frac{\partial \phi}{\partial x_i}(x)\leq 0,$ then for any $t,s\leq 0$ satisfying $x+tm_i+sm_{\hat{i}}\in \R^6_{\geq 1},$ $$\frac{\partial \phi}{\partial x_i}(x+tm_i+sm_{\hat{i}})\leq 0.$$
 In particular,
{$\phi(x)\leq \phi(y),$} for any $y$ satisfying $1\leq y_i\leq x_i, y_j=x_j$  ($\forall j\neq i$).
\end{enumerate}
\end{prop}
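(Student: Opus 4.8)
The plan is to prove Proposition~\ref{prop:mon1} by establishing the key claim that the \emph{sign} of $\frac{\partial \phi}{\partial x_i}$ (for $i\in\{2,3,5,6\}$) is monotone along the coordinate directions $m_i$ and $m_{\hat i}$, in the precise sense that once $\frac{\partial\phi}{\partial x_i}\ge 0$ at a point, increasing $x_i$ or $x_{\hat i}$ keeps it $\ge 0$. By the symmetry in Proposition~\ref{prop:sym} it suffices to treat one representative index, say $i=2$ with $\hat i = 5$; the cases $i=3,5,6$ follow from relabelling. So the heart of the matter is to analyze $\frac{\partial\phi}{\partial x_2}$ from the explicit formula in \eqref{eq:pde},
\[
\frac{\partial \phi}{\partial x_2} = A_0\big[(x_1x_4 + x_2x_5 - x_3x_6)x_6 + x_3 + x_1x_5 +x_2x_4\big],
\]
where $A_0\ge 0$ depends only on $x_1,x_2,x_3,x_5,x_6$ (not on $x_4$). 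Since $A_0>0$ when $x_1>1$, the sign of $\frac{\partial\phi}{\partial x_2}$ equals the sign of the bracketed polynomial
\[
P(x) := (x_1x_4 + x_2x_5 - x_3x_6)x_6 + x_3 + x_1x_5 + x_2x_4 .
\]

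First I would observe that $P$ is affine (degree one) in $x_4$ with positive coefficient $x_1x_6 + x_2 \ge 0$, hence increasing in $x_4$; and one checks directly that $\partial P/\partial x_2 = x_5 x_6 + x_4 \ge 0$ and $\partial P/\partial x_5 = x_2 x_6 + x_1 \ge 0$. Thus $P$ is nondecreasing in $x_2$ and in $x_5$ (indeed in $x_4$ as well), so if $P(x)\ge 0$ then $P(x+tm_2+sm_5)\ge 0$ for all $t,s\ge 0$, which combined with $A_0>0$ gives part (i) of the statement for $i=2$ on the open set $x_1>1$; the case $x_1=1$ follows by continuity of $\phi$ and its derivatives, or one notes $\frac{\partial\phi}{\partial x_2}\equiv 0$ there since $A_0=0$. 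The ``in particular'' clause of (i) is then immediate: along the segment $y\mapsto x + t m_i$ ($t\in[0, y_i-x_i]$) with all other coordinates fixed, the derivative $\frac{\partial\phi}{\partial x_i}$ stays $\ge 0$, so $\phi$ is nondecreasing along it, giving $\phi(x)\le\phi(y)$. Part (ii) is the contrapositive-style dual: if $\frac{\partial\phi}{\partial x_i}(x)\le 0$ then since $\frac{\partial\phi}{\partial x_i}$ is nondecreasing in $x_i$ and $x_{\hat i}$, decreasing those coordinates keeps it $\le 0$, so $\phi$ is nonincreasing as $y_i$ decreases from $x_i$, again giving $\phi(x)\le\phi(y)$ for $1\le y_i\le x_i$.

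The routine but slightly delicate step is verifying the sign claims $\partial P/\partial x_2\ge 0$ and $\partial P/\partial x_5\ge 0$ cleanly — one must keep track of which monomials survive differentiation, and it is worth writing $P$ in the grouped form $P = x_1 x_4 x_6 + x_2 x_5 x_6 - x_3 x_6^2 + x_3 + x_1 x_5 + x_2 x_4$ before differentiating. (The $-x_3x_6^2$ term has no $x_2$ or $x_5$, so it does no harm.) I expect the main conceptual obstacle to be handling the boundary stratum $x_1=1$, where $A_0$ and $A_1$ vanish and the partial derivatives of $\phi$ at edges with $l_1=0$ need a limiting argument rather than the formula \eqref{eq:pde}; but since the proposition allows $x\in\R^6_{\ge 1}$ and the ``in particular'' conclusions are statements about $\phi$ itself (a continuous function on $\R^6_{\ge 0}$), one can always reduce to $x_1>1$ by approximation and pass to the limit. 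The analogous computations for $i=3,5,6$ require no new ideas: by Proposition~\ref{prop:sym}, $\phi(x_1,x_2,x_3,x_4,x_5,x_6)=\phi(x_1,x_3,x_2,x_4,x_6,x_5)$ swaps $x_2\leftrightarrow x_3$ and $x_5\leftrightarrow x_6$, which interchanges the pair $(2,5)$ with $(3,6)$, and the second symmetry handles the remaining relabelling, so the four cases collapse to the single computation above.
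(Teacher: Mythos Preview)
Your proof is correct and follows essentially the same route as the paper: reduce to $i=2$ by symmetry, note that for $x_1>1$ the sign of $\frac{\partial\phi}{\partial x_2}$ is governed by the bracketed polynomial (the paper calls it $G$, you call it $P$), observe that this polynomial is nondecreasing in $x_2$ and $x_5$, and conclude both (i) and (ii) from this monotonicity. Your treatment is slightly more explicit---you compute $\partial P/\partial x_2$ and $\partial P/\partial x_5$ rather than just asserting the monotonicity, and you discuss the boundary case $x_1=1$ more carefully---but there is no substantive difference.
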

\begin{rem} By the above result, for any $i\in \{2,3,5,6\},$ and fixed $x_j$ ($\forall j\neq i$), as a one-variable function of $x_i,$ $\phi(x)$ is monotone in some interval of $x_i.$
\end{rem}
\begin{proof} Without loss of generality, we prove the result for $i=2.$ We may assume that $x_1>1,$ otherwise the result is trivial. By \eqref{eq:pde}, since $A_0\geq 0,$ the sign of $\frac{\partial \phi}{\partial x_i}$ is same as that of
the term in the bracket $[\cdot],$
$$G(x):=(x_1x_4 + x_2x_5 - x_3x_6)x_6 + x_3 + x_1x_5 +x_2x_4.$$ Note that $G(x)$ is monotonely increasing in $x_2$ and $x_5$ for fixed $x_1,x_3,x_4,x_6.$

For the assertion $(i),$ since $\frac{\partial \phi}{\partial x_2}(x)\geq 0$ and $x_1>1,$ $G(x)\geq 0.$
By the monotonicity of $G,$ for any $t,s\geq 0,$ $$G(x+tm_2+sm_5)\geq G(x)\geq 0.$$
This yields that $\frac{\partial \phi}{\partial x_2}(x+tm_2+sm_5)\geq 0.$

Moreover, for the case $s=0,$ $$\frac{\partial \phi}{\partial x_2}(x+tm_2)\geq 0,\ \forall t\geq 0.$$ This implies that for fixed $x_j$ ($\forall j\neq 2$), as a one-variable function $\phi(x_1,\cdot,x_3,x_4,x_5,x_6)$ is monotonely increasing on $(x_2,\infty).$ This proves $(i).$

By the same arguments, one can prove $(ii).$
\end{proof}

This yields the following corollary.
\co\label{coro:mon2}
Let $x\in \R^6_{\geq 1}$ and $i\in \{2,3,5,6\}.$
 If $\frac{\partial \phi}{\partial x_i}(x)\geq 0$ and $\frac{\partial \phi}{\partial x_{\hat{i}}}(x)\geq 0,$ then for any $t,s\geq 0,$ $$\phi(x)\leq\phi(x+tm_i+sm_{\hat{i}}).$$

\cod
\begin{proof} Without loss of generality, we prove the case $i=2.$ 
 By Proposition~\ref{prop:mon1}, for any $t,s\geq 0,$ $$\frac{\partial \phi}{\partial x_2}(x+tm_2+sm_5)\geq 0, \frac{\partial \phi}{\partial x_5}(x+tm_2+sm_5)\geq 0.$$ For fixed $x_1,x_3,x_4,x_6,$ as a two-variable function $\phi(x_1,\cdot,x_3,x_4,\cdot,x_6)$ is monotonely increasing in $x_2$ and $x_5$ respectively. This yields the result.
\end{proof}

Now we are ready to prove some useful estimates.
\tm\label{thm:kest1} Let $x\in \R^6_{\geq 1}$ satisfying $x_i\leq a,\ \forall ~ 2\leq i\leq 6$ for some $a>1.$ Then
$$\phi(x) \le \max\{\phi(x_1,a, a, 1, a, a), \phi(x_1,a, 1, 1, a, 1),\phi(x_1,a, a, 1, a, 1) \}.$$
\tmd
\begin{proof} Noting that $\phi$ is monotonely decreasing in $x_4,$ we have
$$\phi(x)\leq \phi(x_1,x_2,x_3,1,x_5,x_6).$$ By Proposition~\ref{prop:twop}, at the point $(x_1,x_2,x_3,1,x_5,x_6),$ either $\frac{\partial \phi}{\partial x_2}>0$ and $\frac{\partial \phi}{\partial x_5}>0,$ or
\item $\frac{\partial \phi}{\partial x_3}>0$ and $\frac{\partial \phi}{\partial x_6}>0.$ By the symmetry of $\phi,$ Proposition~\ref{prop:sym}, without loss of generality, we may assume that $\frac{\partial \phi}{\partial x_2}>0$ and $\frac{\partial \phi}{\partial x_5}>0.$ By Corollary~\ref{coro:mon2},
$$\phi(x_1,x_2,x_3,1,x_5,x_6)\leq \phi(x_1,a,x_3,1,a,x_6).$$

Considering $\frac{\partial \phi}{\partial x_3}$ at the point $(x_1,a,x_3,1,a,x_6),$ we have the following cases:
\begin{description}
  \item[Case 1] $\frac{\partial \phi}{\partial x_3}(x_1,a,x_3,1,a,x_6)\geq 0.$ By $(i)$ in Proposition~\ref{prop:mon1},
  $$\phi(x_1,a,x_3,1,a,x_6)\leq \phi(x_1,a,a,1,a,x_6).$$ Now we consider $\frac{\partial \phi}{\partial x_6}$ at the point $(x_1,a,a,1,a,x_6)$ and divide it into subcases.
\begin{description}
  \item[Case 1.1] $\frac{\partial \phi}{\partial x_6}(x_1,a,a,1,a,x_6)\geq 0.$ By $(i)$ in Proposition~\ref{prop:mon1},
  $$\phi(x_1,a,a,1,a,x_6)\leq \phi(x_1,a,a,1,a,a).$$

   \item[Case 1.2] $\frac{\partial \phi}{\partial x_6}(x_1,a,a,1,a,x_6)< 0.$ By $(ii)$ in Proposition~\ref{prop:mon1},
  $$\phi(x_1,a,a,1,a,x_6)\leq \phi(x_1,a,a,1,a,1).$$
  \end{description}

  \item[Case 2] $\frac{\partial \phi}{\partial x_3}(x_1,a,x_3,1,a,x_6)< 0.$
  By $(ii)$ in Proposition~\ref{prop:mon1},
  $$\phi(x_1,a,x_3,1,a,x_6)\leq \phi(x_1,a,1,1,a,x_6).$$ Now we consider $\frac{\partial \phi}{\partial x_6}$ at the point $(x_1,a,1,1,a,x_6)$ and divide it into subcases.
\begin{description}
  \item[Case 2.1] $\frac{\partial \phi}{\partial x_6}(x_1,a,1,1,a,x_6)\geq 0.$ By $(i)$ in Proposition~\ref{prop:mon1},
  $$\phi(x_1,a,1,1,a,x_6)\leq \phi(x_1,a,1,1,a,a).$$

   \item[Case 2.2] $\frac{\partial \phi}{\partial x_6}(x_1,a,1,1,a,x_6)< 0.$ By $(ii)$ in Proposition~\ref{prop:mon1},
  $$\phi(x_1,a,1,1,a,x_6)\leq \phi(x_1,a,1,1,a,1).$$
  \end{description}

  \end{description}

Combining all cases above, noting that $\phi(x_1,a,a,1,a,1)=\phi(x_1,a,1,1,a,a),$ we prove the theorem.

\end{proof}

This result yields the estimate for the dihedral angle at the longest edge of a generalized hyper-ideal tetrahedron.
\co\label{lem:longest}

Let $\sigma$ be a generalized hyper-ideal tetrahedron in the wide sense with the edge length vector $l\in \R^6_{\geq 0}.$ Suppose that $l_1=\max_{1\leq i\leq 6}l_i,$ then
$$\cos\alpha_1\leq \max\{\phi(x_1,x_1, x_1, 1, x_1, x_1), \phi(x_1, x_1, 1, 1, x_1, 1),\phi(x_1, x_1, x_1, 1, x_1, 1) \},$$ where $x_1=\cosh l_1.$ Moreover, if $l_1=\arccosh 3,$ then
$$\alpha_1> \frac{\pi}{5}.$$
\cod
\begin{rem} The condition $l_1=\arccosh 3$ is properly chosen for our further applications.
\end{rem}
\begin{proof} For $x_i=\cosh l_i,$ $1\leq i\leq 6,$ $x_1=\max_{1\leq i\leq 6}x_i.$ Setting $a=x_1$ in Theorem~\ref{thm:kest1}, we prove the first assertion.

For $l_1=\arccosh 3,$ $x_1=3.$ This yields that
$$\cos\alpha_1\leq \frac{4}{5}.$$ This implies that $\alpha_1> \frac{\pi}{5}.$
\end{proof}

Moreover, we prove the following theorem.
\begin{theo}\label{Range}
Let $\sigma$ be a generalized hyper-ideal tetrahedron with the edge length vector $l\in \R^6_{>0}.$ If $l_i\leq \arccosh 3,$ for all $1\leq i\leq 6,$ then  $$\phi_i(l)\in (-1,1)\quad \mathrm{for\ all}\ 1\leq i\leq 6.$$ In particular, $\sigma$ is a hyper-ideal tetrahedron, i.e. $l\in \mathcal{L}.$
\end{theo}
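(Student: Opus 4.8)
The plan is to verify the criterion of Proposition~\ref{REAL}: for such an $l$ one must show $\phi_i(l)=\phi_{ij}(l)\in(-1,1)$ for every edge $e_{ij}$ of $\sigma$. Since both the hypothesis ($l_{pq}\le\arccosh 3$ for all edges) and the formula \eqref{angle} are invariant under permuting the four vertices, it suffices to bound $\phi:=\phi_{12}$; the remaining edges then follow by relabeling. I would split the proof into the upper bound $\phi_{12}(l)<1$ and the lower bound $\phi_{12}(l)>-1$, establishing the upper bound for \emph{all} edges first and deducing the lower bounds from it.

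For the upper bound, write $x_i=\cosh l_i\in(1,3]$. Taking $a=3$ in Theorem~\ref{thm:kest1} (legitimate since every $x_i\le 3$) gives
\[\phi(x)\le\max\{\phi(x_1,3,3,1,3,3),\,\phi(x_1,3,1,1,3,1),\,\phi(x_1,3,3,1,3,1)\}.\]
Using \eqref{PHI} and factoring the radicands, the three quantities on the right are the one-variable functions $\tfrac{19+18x_1-x_1^2}{(x_1+1)(x_1+17)}$, $\tfrac{7+10x_1-x_1^2}{(x_1+3)^2}$, and $\tfrac{13+12x_1-x_1^2}{(x_1+3)\sqrt{(x_1+1)(x_1+17)}}$. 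Each is $<1$ for $x_1\in(1,3]$: the first is equivalent to $x_1^2>1$, the second to $(x_1-1)^2>0$, and the third, after squaring the positive numerator against the denominator, to $(x_1+3)^2(x_1+1)(x_1+17)-(13+12x_1-x_1^2)^2=16(3x_1+1)(x_1-1)(x_1+1)>0$, which holds for $x_1>1$. Hence $\phi_{12}(l)<1$, and by relabeling $\phi_{ij}(l)<1$ for every edge.

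For the lower bound I would pass to the vertex triangle at vertex $1$. For any $l\in\R^6_{>0}$ the numbers $x_{jk}^1$ of \eqref{x} are well-defined positive reals, since $\cosh l_{1j}\cosh l_{1k}+\cosh l_{jk}>\sinh l_{1j}\sinh l_{1k}$; moreover the expression \eqref{phi} for $\phi_{kh}^1$, which coincides with the right side of \eqref{angle} on $\mathcal{L}$ by Lemma~\ref{l1}, coincides with it on all of $\R^6_{>0}$ — both sides are real-analytic there and $\mathcal{L}$ has nonempty interior. Thus $\phi_{12}=\tfrac{\cosh x_{23}^1\cosh x_{24}^1-\cosh x_{34}^1}{\sinh x_{23}^1\sinh x_{24}^1}$ and $\phi_{13}=\tfrac{\cosh x_{23}^1\cosh x_{34}^1-\cosh x_{24}^1}{\sinh x_{23}^1\sinh x_{34}^1}$, so the already established bound $\phi_{13}(l)<1$ forces $\cosh(x_{23}^1-x_{34}^1)<\cosh x_{24}^1$, hence $x_{34}^1<x_{23}^1+x_{24}^1$, hence $\cosh(x_{23}^1+x_{24}^1)>\cosh x_{34}^1$, i.e. $\phi_{12}(l)>-1$. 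By relabeling $\phi_{ij}(l)>-1$ for every edge, and together with the upper bounds this gives $\phi_{ij}(l)\in(-1,1)$ for all edges, whence $l\in\mathcal{L}$ by Proposition~\ref{REAL}.

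The substantive work is the upper bound: setting up the reduction through Theorem~\ref{thm:kest1} and checking the three explicit one-variable inequalities, of which only the third needs any algebra — and even that collapses to the clean factorization above. By contrast the lower bound costs nothing extra, since it falls out of the triangle inequality in the vertex triangle $\Delta_1$ once $\phi_{ij}<1$ is known for all edges; the only point requiring care there is identifying \eqref{phi} with \eqref{angle} outside $\mathcal{L}$, which the analyticity argument handles.
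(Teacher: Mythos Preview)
Your upper-bound argument matches the paper's exactly: invoke Theorem~\ref{thm:kest1} with $a=3$, compute the three one-variable functions, and check each is $<1$ on $(1,3]$. Your verification is a touch more explicit (the paper simply asserts the final inequality), and your factorization $16(3x_1+1)(x_1-1)(x_1+1)$ for the third case is correct.

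Your lower-bound argument, however, is genuinely different from the paper's and rather nicer. The paper bounds $\phi^-=-\phi\wedge 0$ directly: when $\phi<0$, the numerator $(x_1^2-1)x_4-x_2x_3-x_5x_6-x_1x_2x_5-x_1x_3x_6$ is increasing in $x_4$ and decreasing in $x_2,x_3,x_5,x_6$, so one pushes $x_4\to 3$ and $x_2,x_3,x_5,x_6\to 1$ to get $\phi^-<\frac{3x_1^2-2x_1-5}{(x_1+1)^2}\le 1$ on $(1,3]$. Your route instead recycles the already-proven upper bounds: the vertex-triangle formula \eqref{phi} recognises $\phi_{12},\phi_{13},\phi_{14}$ as the cosines of a hyperbolic triangle with sides $x_{23}^1,x_{24}^1,x_{34}^1$, so $\phi_{13}<1$ forces the triangle inequality $|x_{23}^1-x_{34}^1|<x_{24}^1$, whence $x_{34}^1<x_{23}^1+x_{24}^1$, whence $\phi_{12}>-1$. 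This is conceptually cleaner and costs no extra computation. The only price is the need to extend the identity \eqref{phi}$=$\eqref{angle} from $\mathcal{L}$ to $\R^6_{>0}$; your real-analyticity argument (both sides analytic on the connected domain $\R^6_{>0}$, equal on the open set $\mathcal{L}$) handles this correctly. The paper's approach, by contrast, stays entirely within the explicit formula \eqref{PHI} and needs no such extension.
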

\begin{proof} For the first assertion, we prove the upper and lower bound estimate for $\phi_i$ respectively. Let $x_i=\cosh(l_i),$ $1\leq i\leq 6.$ This yields that $x_i\in (1,3]$ for all $i.$ Without loss of generality, it suffices to consider $i=1$ by the symmetry. Let $\phi=\phi_1.$

For the upper bound estimate of $\phi,$ by setting $a=3$ in Theorem~\ref{thm:kest1},
\begin{eqnarray*}\phi(x)&\leq& \max\{\phi(x_1,3, 3, 1, 3, 3), \phi(x_1, 3, 1, 1, 3, 1),\phi(x_1, 3, 3, 1, 3, 1) \}\\
&=& \max\left\{\frac{-x_1 + 19}{x_1 + 17},
\frac{-x_1^2 + 10x_1 + 7}{x_1^2 + 6x_1 + 9},\frac{-x_1^2 + 12x_1 + 13}{(x_1 + 3)\sqrt{x_1^2 + 18x_1 +17}}\right\}.
\end{eqnarray*} Note that the right hand side of the equation is less than 1 for {$x_1\in (1,3].$} This yields that $\phi(x)<1.$

For the lower bound estimate of $\phi,$ set $\phi^-(x)=-(\phi(x)\wedge 0),$
i.e. the negative part of $\phi(x).$ It suffices to prove that $\phi^-(x)<1.$
We may assume that $\phi(x)<0,$ otherwise it is trivial. In this case,
\begin{eqnarray*}
0<\phi^-(x) &=&
\frac{ (x ^2_1-1)x_4-x_2x_3 - x_5x_6 - x_1x_2x_5 - x_1x_3x_6}{\sqrt {2x_1x_2x_6 + x^2_1 + x^2_2 + x^2_6 - 1}\sqrt {2x_1x_3x_5 + x^2_1 + x^2_3 + x^2_5 - 1}}\\
&<& \frac{3x_1^2-2x_1-5}{x_1^2+2x_1+1},
\end{eqnarray*} where we use that the right hand side of the first equation is monotonely increasing in $x_4$ and decreasing in $x_2,x_3,x_5$ and $x_6.$ The result follows from
$$\frac{3x_1^2-2x_1-5}{x_1^2+2x_1+1}\leq 1,\quad x_1\in (1,3].$$


By Proposition~\ref{REAL}, the second assertion follows from the first one.
\end{proof}

Note that $\phi_{ij}$ and $\alpha_{ij},$ defined in \eqref{defi:dihe}, are continuous functions on $\R^6_{\geq 0}.$ In particular, for fixed $x_2,\cdots,x_6,$ $$\phi\to 1,\quad \mathrm{as}\ \ x_1\to 1.$$ This yields that $\alpha_1\to 0.$ In the following, we give a quantitative estimate.
\begin{prop}\label{prop:c0}
For any $C>1$ and $\epsilon>0,$ there exists $\delta(C,\epsilon)>0$ such that
for $$\max_{2\leq i\leq 6}\{x_i\}\leq C\quad \mathrm{and}\quad \ x_1\leq 1+\delta,$$ we have $$\phi(x)\geq \cos\epsilon.$$ In particular, $\alpha_1\leq \epsilon.$  For the case that $C=3$ and {$\epsilon < \pi,$} we can choose $$\delta(3,\epsilon)=\frac{1}{5\pi^2}\epsilon^2.$$ \end{prop}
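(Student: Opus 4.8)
The plan is to estimate $\phi(x)$ from below directly near $x_1 = 1$, exploiting the continuity already established and the explicit algebraic form \eqref{PHI}. First I would rewrite the numerator of $\phi$ as $N(x) = x_2x_3 + x_5x_6 + x_1x_2x_5 + x_1x_3x_6 - (x_1^2-1)x_4 + x_4 - x_4 = x_2x_3 + x_5x_6 + x_1x_2x_5 + x_1x_3x_6 - (x_1^2-1)x_4$ (keeping track of the $x_4$ terms carefully), and note that when $x_1 = 1$ the two square roots in the denominator of \eqref{PHI} become $\sqrt{(x_2+x_6)^2} = x_2+x_6$ and $\sqrt{(x_3+x_5)^2} = x_3+x_5$, while the numerator becomes $x_2x_3 + x_5x_6 + x_2x_5 + x_3x_6 = (x_2+x_6)(x_3+x_5)$. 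Hence $\phi(1,x_2,\dots,x_6) = 1$ identically, and $\alpha_1 = 0$ there, confirming the qualitative claim before the Proposition.

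The next step is a quantitative Lipschitz-type bound: I would show $1 - \phi(x) \le K(C)\,(x_1 - 1)$ whenever $x_1 \ge 1$ and $x_2,\dots,x_6 \le C$, by estimating $\partial\phi/\partial x_1$ (or, more robustly, by writing $1-\phi = \frac{D - N}{D}$ where $D$ is the product of the two square roots, bounding $D$ below away from $0$, and bounding $D - N$ above by something linear in $x_1-1$ since $D - N$ vanishes at $x_1 = 1$ for each fixed $(x_2,\dots,x_6)$). The point is that $D - N$ is a polynomial in $x_1$ with no constant discrepancy at $x_1=1$, so $|D-N| \le (x_1-1)\sup|\partial_{x_1}(D-N)|$, and all the relevant quantities are controlled in terms of $C$ on the compact range. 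Combining with $1 - \cos\epsilon \ge \epsilon^2/2 - \epsilon^4/24 \ge \epsilon^2/\pi^2$ for $0 < \epsilon < \pi$ (a routine elementary inequality), it suffices to take $\delta$ so that $K(C)\,\delta \le 1 - \cos\epsilon$, which gives $\delta = (1-\cos\epsilon)/K(C)$, and $\phi(x) \ge \cos\epsilon$ then yields $\alpha_1 = \arccos(\phi) \le \epsilon$ by monotonicity of $\arccos$.

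For the explicit constant in the case $C = 3$, I would carry out the above bound keeping numerical track of the constants: with all $x_i \le 3$ and $x_1 \in [1,1+\delta]$ for $\delta$ small (say $\delta \le 1$), one checks $D \ge (x_2+x_6)(x_3+x_5) - O(x_1-1) \ge$ some absolute positive constant, and $D - N \le (\text{const})(x_1-1)$; a direct computation of the coefficients (using $x_i \le 3$ to bound products like $x_2 x_5 x_3$, $x_1 x_4$, etc.) shows one may take $K(3) = 5$, so $\delta(3,\epsilon) = (1-\cos\epsilon)/5 \ge \frac{\epsilon^2}{5\pi^2}$, matching the stated value. The main obstacle I anticipate is purely bookkeeping: getting the constant $K(3)$ small enough to land exactly at $\frac{1}{5\pi^2}\epsilon^2$ requires a somewhat careful (but elementary) accounting of the polynomial $D - N$ and the lower bound on $D$ on the box $[1,2]\times[1,3]^5$, rather than any conceptual difficulty — the continuity, the vanishing of $1-\phi$ at $x_1=1$, and the elementary bound $1-\cos\epsilon \ge \epsilon^2/\pi^2$ do all the real work.
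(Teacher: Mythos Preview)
Your approach is essentially the paper's: both observe that $\phi(1,x_2,\dots,x_6)=1$ via the factorization $(x_2+x_6)(x_3+x_5)$, then bound $1-\phi$ linearly in $\delta=x_1-1$ on the box, and finally compare with a lower bound on $1-\cos\epsilon$. The paper carries this out by bounding numerator and denominator of \eqref{PHI} separately rather than invoking an abstract mean-value step, arriving at
\[
\phi(x)\;\ge\;\frac{1-\tfrac34(2\delta+\delta^2)}{1+\tfrac14(20\delta+\delta^2)}\quad(C=3,\ \delta<\tfrac15),
\]
which after simplification gives $1-\phi\lesssim 7.5\,\delta$ and in any case $\phi\ge 1-10\delta$.

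Two points to flag. First, a minor slip: $D$ is a product of square roots, so $D-N$ is not a polynomial in $x_1$; your Lipschitz argument still goes through via the mean value theorem, but the phrasing should be adjusted. Second, and more substantive, your constants do not quite close: you use the weaker inequality $1-\cos\epsilon\ge \epsilon^2/\pi^2$ and then need $K(3)=5$ to land on $\delta=\epsilon^2/(5\pi^2)$. The direct computation (as above) only yields a Lipschitz constant around $7.5$--$10$, not $5$, so with your cosine bound you would fall short by a factor of about $2$. The paper closes this gap by using the sharper Jordan-type inequality
\[
1-\cos\epsilon \;=\; 2\sin^2(\epsilon/2)\;\ge\;\frac{2}{\pi^2}\epsilon^2\qquad(0<\epsilon<\pi),
\]
which is exactly the missing factor of $2$. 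With that replacement your outline matches the paper's proof.
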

\begin{proof}
By \eqref{PHI}, noting that $x_i\in [1,C],\forall\ 2\leq i\leq 6$ and $x_1\in [1,1+\delta],$
\begin{eqnarray*}
\phi(x) &\geq&
\frac{x_2x_3 + x_5x_6 + x_2x_5 + x_3x_6 - (2\delta+\delta^2)C}{\sqrt {2x_1x_2x_6 + x^2_1 + x^2_2 + x^2_6 - 1}\sqrt {2x_1x_3x_5 + x^2_1 + x^2_3 + x^2_5 - 1}}.
\end{eqnarray*} Choose $\delta<\frac{1}{C}$ such that the numerator is positive. Hence
\begin{eqnarray}
\phi(x) &\geq&
\frac{(x_2+x_6)(x_3+x_5) - (2\delta+\delta^2)C}{\sqrt {(x_2+x_6)^2 + 2\delta C^2 + 2\delta+\delta^2}\sqrt {(x_3+x_5)^2 + 2\delta C^2 + 2\delta+\delta^2}}\nonumber\\
&=&\frac{1 - \frac{(2\delta+\delta^2)C}{(x_2+x_6)(x_3+x_5)}}{\sqrt {1 + (2\delta C^2 + 2\delta+\delta^2)/(x_2+x_6)^2}\sqrt {1 + (2\delta C^2 + 2\delta+\delta^2)/(x_3+x_5)^2}}\nonumber\\
&\geq &\frac{1 - \frac14(2\delta+\delta^2)C}{1 + \frac14(2\delta C^2 + 2\delta+\delta^2)}\to 1,\quad\quad \mathrm{as}\ \delta\to 0.\label{eq:del1}
\end{eqnarray}
Hence for any $\epsilon>0,$ there exists $\delta(C,\epsilon)>0$ such that
$$\phi(x)\geq \cos\epsilon.$$ This proves the first assertion.

For the second assertion, for $C=3$ and {$\epsilon < \pi,$} we choose
$\delta=\frac{1}{5\pi^2}\epsilon^2<\frac{1}{5}.$
By \eqref{eq:del1},
\begin{eqnarray*}
\phi(x) &\geq&\frac{1 - \frac34(2\delta+\delta^2)}{1 + \frac14(20\delta+\delta^2)}\\
&\geq&\frac{1 - \frac94\delta}{1 + \frac{21}{4}\delta}\geq 1-\frac{2}{\pi^2}\epsilon^2.
\end{eqnarray*} Note that for any {$0 < \epsilon < \pi,$}
\begin{equation*}1-\cos\epsilon=2\sin^2(\frac{\epsilon}{2})\geq\frac{2}{\pi^2}\epsilon^2.
\end{equation*}
Hence $\phi(x)\geq \cos\epsilon.$ This proves the second assertion.

\end{proof}

\section{Extended Ricci flows}\label{sec:ecf}


Let $\mathscr{T}= \{T_1,\cdots, T_t
\},$ $t\in \N$, be a finite collection of combinatorial tetrahedra. Let $(M,\T)$ be a closed pseudo 3-manifold, which is the quotient space of $\mathscr{T}=T_1 \sqcup\cdots \sqcup T_t,$ a simplicial complex of the disjoint union of tetrahedra, via affine isomorphisms pairing faces of
tetrahedra. We denote by $E(\cdot)$ ($T(\cdot),$ resp.) the set of edges (the set of tetrahedra, resp.) in $(\cdot).$ For any $e\in E(\cdot)$ and $\sigma\in T(\cdot),$ we say that $e$ is \emph{incident} to $\sigma,$ denoted by $e\sim \sigma$, if the former is contained in the latter.

We denote by
$$P_E: E(\mathscr{T})\to E(\T),\quad P_T: T(\mathscr{T})\to T(\T)$$ the projection maps (or the quotient maps) on the set of edges and tetrahedra respectively.
Note that for any $e\in E(\T),$ the valence of $e,$ $d_e$, is given by the number of edges in $P_E^{-1}(e).$ For $l\in \R^{E(\T)}_{>0},$ it induces the length vector on $E(\mathscr{T})$ via
$$\hat{l}:=l\circ P_E: E(\mathscr{T})\to \R_{>0}.$$ We endow each $T_j\in \mathscr{T}$ with the structure of generalized hyper-ideal tetrahedron by the length vector $\hat{l}.$ For any $\hat{e}\in E(\mathscr{T}),$ it is contained in a unique tetrahedron $T_j.$ We denote by $\alpha(\hat{e})$ the extended dihedral angle of $\hat{e}$ in $T_j$ with respect to the length vector $\hat{l},$ which is given by Definition~\ref{defi:dihe}.

\begin{defi}\label{defi:pre}For any $l\in \R^{E(\T)}_{>0},$ the \emph{generalized curvature} of an edge $e\in E(\T)$ is defined as
$$\wt{K}_e(l)=2\pi-\sum_{\hat{e}\in P_E^{-1}(e)} \alpha(\hat{e}).$$
\end{defi}
\begin{rem}\begin{enumerate}\item This extends the definition of the Ricci curvature of an edge for hyper-ideal metrics, see Definition~\ref{defi:realcurv}, i.e., for any $l\in \mathcal{L}(M,\T),$
$$\wt{K}_e(l)=K_e(l),\quad \forall e\in E(\T).$$ \item For any $l\in \R^{E(\T)}_{\geq 0},$ the dihedral angles are well defined; see Definition~\ref{defi:dihe}. Hence the generalized Ricci curvature can be extended to $\R^{E(\T)}_{\geq 0},$ still denoted by $\wt{K}.$
\end{enumerate}\end{rem}

We prove the regularity for the extended curvature function $\wt{K}(l).$
\begin{prop}\label{prop:lipc} The extended Ricci curvature $\wt{K}(l)$ is locally Lipschitz in $\R^{E(\T)}_{>0}.$
\end{prop}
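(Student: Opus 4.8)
The plan is to reduce the Lipschitz property of $\wt K$ to the smoothness of the elementary building blocks and then chain these together. Recall from Definition~\ref{defi:pre} that $\wt K_e(l) = 2\pi - \sum_{\hat e \in P_E^{-1}(e)} \alpha(\hat e)$, where each $\alpha(\hat e)$ is the extended dihedral angle of some edge in a single tetrahedron $T_j$, computed from the restricted length vector $\hat l = l \circ P_E$. Since a finite sum of (locally) Lipschitz functions is (locally) Lipschitz, and $l \mapsto \hat l$ is linear (it just repeats coordinates according to the gluing pattern), it suffices to show that for a single generalized hyper-ideal tetrahedron the map $l \in \R^6_{>0} \mapsto \alpha_{ij}(l)$ is locally Lipschitz.

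For this, first I would trace through the definition of $\alpha_{ij}$. By Definition~\ref{defi:dihe}, $\alpha_{ij} = \arccos(-1 \vee \phi_{ij} \wedge 1)$, where $\phi_{ij}$ is given by the explicit rational-and-square-root formula \eqref{angle} in the variables $c_{ij} = \cosh l_{ij}$, $s_{ij} = \sinh l_{ij}$. The key observation is that on $\R^6_{>0}$ the denominators in \eqref{angle}, namely $\sqrt{2c_{ij}c_{ik}c_{jk} + c_{ij}^2 + c_{ik}^2 + c_{jk}^2 - 1}$, are bounded away from zero on any compact subset of $\R^6_{>0}$ — indeed each $c_{ij} \ge 1$ forces the radicand to be at least $2$. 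Hence $\phi_{ij}$ is a composition of $C^\infty$ functions ($\cosh$, $\sinh$, polynomials, $\sqrt{\cdot}$ away from $0$, division by a nonvanishing quantity) and is therefore $C^\infty$, in particular locally Lipschitz, on $\R^6_{>0}$. The cutoff $t \mapsto -1 \vee t \wedge 1$ is globally $1$-Lipschitz, so $-1 \vee \phi_{ij} \wedge 1$ is locally Lipschitz. Finally $\arccos$ is continuous on $[-1,1]$ but only locally Lipschitz on the open interval $(-1,1)$ — its derivative blows up at $\pm 1$ — so one must be slightly careful here.

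The main obstacle, then, is precisely the endpoint behavior of $\arccos$: a priori $\phi_{ij}(l)$ could equal $\pm 1$ for some $l \in \R^6_{>0}$, and near such a point $\arccos$ is not Lipschitz. I would handle this by splitting $\R^6_{>0}$ into the open set $\{l : \phi_{ij}(l) \in (-1,1)\}$, which by Proposition~\ref{REAL} is exactly (the interior of) $\mathcal L$, where $\arccos \circ (\text{cutoff}) \circ \phi_{ij}$ is locally Lipschitz by the chain rule, and the closed set where the cutoff is locally constant (equal to $0$ or $\pi$), on which $\alpha_{ij}$ is trivially locally Lipschitz. At a boundary point $l_0$ with $\phi_{ij}(l_0) = 1$, say, one notes that for $l$ near $l_0$, $|\alpha_{ij}(l) - \alpha_{ij}(l_0)| = |\alpha_{ij}(l)| \le \sqrt{2}\sqrt{1 - (\phi_{ij}(l) \vee 0 \wedge 1)} \le \sqrt{2}\sqrt{|\phi_{ij}(l) - \phi_{ij}(l_0)|}$, which gives only Hölder-$\tfrac12$ control, not Lipschitz. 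The clean fix, and the one I would pursue, is to observe (from the explicit structure of \eqref{angle}, or by invoking the relevant estimates already available in \cite{[LY]}) that in fact $\phi_{ij}$ never attains the value $\pm 1$ on $\R^6_{>0}$ — the values $\pm 1$ correspond to degenerate configurations where some $l_{ij} = 0$ — so the boundary case simply does not occur, and $\R^6_{>0} = \{l : \phi_{ij}(l) \in (-1,1)\} \cup \{l : \phi_{ij}(l) \text{ outside } [-1,1]\}$ with the cutoff locally constant on the second piece; on a neighborhood of any point one is in one regime or the other and the chain rule applies. Summing over the finitely many edges in $P_E^{-1}(e)$ and over $e$, and precomposing with the linear coordinate-repetition map, then yields that $\wt K$ is locally Lipschitz on $\R^{E(\T)}_{>0}$.
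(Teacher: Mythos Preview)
Your reduction to a single tetrahedron and the observation that $\phi_{ij}$ is smooth on $\R^6_{>0}$ match the paper's argument exactly, and you go further by explicitly flagging the $\arccos$ endpoint issue, which the paper simply passes over with the sentence ``By Definition~\ref{defi:dihe}, $\alpha_{ij}$ is also locally Lipschitz.'' So the overall strategy is the same; the difference is only in how step ``$\phi_{ij}$ Lipschitz $\Rightarrow$ $\alpha_{ij}$ Lipschitz'' is (or is not) justified.

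However, your proposed fix is incorrect. You claim that $\phi_{ij}$ never attains the value $\pm 1$ on $\R^6_{>0}$, so that $\R^6_{>0}$ decomposes as $\{\phi_{ij}\in(-1,1)\}\cup\{\phi_{ij}\notin[-1,1]\}$. But this is a disjoint union of two open sets, both nonempty: the first contains $\mathcal{L}$ (Proposition~\ref{REAL}), and the second is nonempty because $\mathcal{L}\subsetneq\R^6_{>0}$ (the paper explicitly discusses degenerate and flat hyper-ideal tetrahedra with $l\in\R^6_{>0}\setminus\mathcal{L}$). Since $\R^6_{>0}$ is connected, it cannot be such a disjoint union, so $\{\phi_{ij}=\pm 1\}\cap\R^6_{>0}\neq\emptyset$; in fact every boundary point of $\mathcal{L}$ in $\R^6_{>0}$ has some $\phi_{ij}$ equal to $\pm 1$ by continuity. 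At such a point your own H\"older-$\tfrac12$ computation applies, and the chain-rule argument breaks down. In short, you have correctly located the only nontrivial issue, but the resolution you propose does not hold; the paper's proof does not resolve it either, so any genuine justification of Lipschitz continuity of $\alpha_{ij}$ across $\partial\mathcal{L}$ must come from finer structural information about $\phi_{ij}$ near $\phi_{ij}=\pm 1$ (or from the results in \cite{[LY]}) rather than from the blanket claim that these values are never attained.
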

\begin{proof} By the definition, it suffices to prove that for a tetrahedron $\hat{\sigma}\in T(\mathscr{T})$ with the generalized hyper-ideal metric given by $\hat{l}=l\circ P_E,$ for any $\hat{e}\sim \hat{\sigma},$
$\alpha(\hat{e}),$ as a function of $l,$ is locally Lipschitz on $\R^{E(\T)}_{>0}.$

Note that for a single tetrahedron $\tau=\{1,2,3,4\}$ with a generalized hyper-ideal metric $(l_{12},\cdots,l_{34})\in \R^6_{>0}$ and any $\{i,j\}\subset\{1,2,3,4\},$
the function $\phi_{ij},$ defined in \eqref{angle}, is locally Lipschitz for $(l_{12},\cdots,l_{34})\in \R^6_{>0}.$ By Definition~\ref{defi:dihe}, $\alpha_{ij}$ is also locally Lipschitz for $(l_{12},\cdots,l_{34})\in \R^6_{>0}.$

By the above observation, for the tetrahedron $\hat{\sigma},$ $\alpha(\hat{e})$ is a locally Lipschitz function in the variables of its edge lengths, which is given by $\hat{l}.$ Hence the extended Ricci curvature $\wt{K}(l)$ is locally Lipschitz for $l\in\R^{E(\T)}_{>0}.$
\end{proof}

Let $E(\T)=\{e_1,e_2,\cdots, e_m\},$ where $m$ is the number of edges. To simplify the notation, we write $E=E(\T)=\{1,2,\cdots, m\},$ that is, each edge $e_i$ is replaced by the index $i.$ In this way, the edge length vector is written as $$l=(l_1,l_2,\cdots l_m):=(l(e_1),l(e_2),\cdots, l(e_m)).$$ Given a generalized hyper-ideal metric $l\in\R^E_{>0},$ the generalized Ricci curvature at edges can be written as a vector \[\wt{K}(l)=(\wt{K}_1(l), \wt{K}_2(l),\cdots, \wt{K}_m(l)).\]
This yields the curvature map $$\wt{K}:\R^E_{>0}\to \R^E,\quad l\mapsto \wt{K}(l).$$ If $l$ is a hyper-ideal metric, i.e., $l\in \mathcal{L}(M,\T),$ we write $K$ as above instead of $\wt{K}.$

For simplicity, we write $T=T(\T).$ Note that the map $P_T$ is a bijection. For any $\sigma \in T,$ we write $$\hat{\sigma}=P_T^{-1}(\sigma).$$


For a closed pseudo 3-manifold $(M, \mathcal{T}),$ following Luo and Yang \cite{[LY]},  we define the co-volume functional as
\[cov:\R^{E}\to\R,\quad \quad cov(l) = \sum_{\sigma \in T} cov_{\hat{\sigma}}(\hat{l})\]
{where $cov_{\hat{\sigma}}$ denotes the co-volume functional of a tetrahedron $\hat{\sigma}$ defined in \eqref{cov}. By Proposition~\ref{covx} and Proposition~\ref{prop:strictconvex}, $cov_{\hat{\sigma}}$ is a $C^1$-smooth convex function on $\R^6,$ which is locally strictly convex in $\mathcal{L}.$ This yields the following proposition.
\begin{prop}\label{prop:pp1} The functional $cov$ is a $C^1$-smooth convex function on $\R^E,$ which is smooth and locally strictly convex on $\mathcal{L}(M, \mathcal{T}).$
\end{prop}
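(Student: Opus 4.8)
The plan is to reduce the statement about the sum $cov = \sum_{\sigma\in T} cov_{\hat\sigma}(\hat l)$ to the already-established facts about the co-volume functional of a single tetrahedron, namely Proposition~\ref{covx} ($C^1$-smoothness and convexity on $\R^6$) and Proposition~\ref{prop:strictconvex} (smoothness and positive-definite Hessian, hence local strict convexity, on $\mathcal L$). The key observation is that for each tetrahedron $\sigma\in T$, the assignment $l\mapsto \hat l|_{E(\hat\sigma)}$ is the restriction of a \emph{linear} (in fact coordinate-projection-type, but with possible repetitions coming from $P_E$) map $\R^E\to\R^6$; precomposition of a $C^1$/smooth/convex function with a linear map preserves each of these properties, and a finite sum of $C^1$/smooth/convex functions is again $C^1$/smooth/convex. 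This immediately gives that $cov$ is $C^1$-smooth and convex on all of $\R^E$.

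The second claim, smoothness and local strict convexity on $\mathcal L(M,\T)$, requires slightly more care. First I would note that, by definition, $l\in\mathcal L(M,\T)$ means every tetrahedron $\hat\sigma$ carries an honest hyper-ideal metric, i.e. $\hat l|_{E(\hat\sigma)}\in\mathcal L$ for each $\sigma$; hence each summand $cov_{\hat\sigma}$ is smooth near such a point by Proposition~\ref{prop:strictconvex}, so $cov$ is smooth on $\mathcal L(M,\T)$. For local strict convexity, I would compute the Hessian of $cov$ at $l\in\mathcal L(M,\T)$: it is the sum over $\sigma\in T$ of the pullbacks $L_\sigma^{\mathsf T}\,\mathrm{Hess}(cov_{\hat\sigma})\,L_\sigma$, where $L_\sigma$ is the linear map sending $l\in\R^E$ to the length vector of $\hat\sigma$'s six edges. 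Each $\mathrm{Hess}(cov_{\hat\sigma})$ is positive definite on $\R^6$ by Proposition~\ref{prop:strictconvex}, so each pulled-back term is positive semi-definite; it suffices to show their sum is positive definite, i.e. that a vector $v\in\R^E$ with $L_\sigma v=0$ for every $\sigma$ must be zero. This holds because every edge of $(M,\T)$ lies in at least one tetrahedron, so the maps $\{L_\sigma\}_{\sigma\in T}$ have trivial common kernel: if $L_\sigma v = 0$ for all $\sigma$, then $v(e)=0$ for every edge $e$ (each coordinate $v(e)$ appears as some coordinate of $L_\sigma v$ for a tetrahedron $\sigma$ containing $e$).

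The main (and essentially only) obstacle is a bookkeeping one: making precise the relation between the global variable $l\in\R^E$ and the local six-dimensional variable of each tetrahedron through the quotient maps $P_E$, $P_T$, and checking that the ``valence'' multiplicity — an edge of $(M,\T)$ may be glued from several edges of $\mathscr T$ all lying in the same or different tetrahedra — does not break linearity of $L_\sigma$ or the triviality of the common kernel. None of this is deep; it is just the observation that $\hat l = l\circ P_E$ depends linearly on $l$ and that each coordinate of $l$ is ``seen'' by at least one tetrahedron. I would therefore present the proof as: (1) express $cov$ as a finite sum of linear pullbacks of the single-tetrahedron co-volume; (2) invoke Propositions~\ref{covx} and~\ref{prop:strictconvex} together with stability of $C^1$/smooth/convex under linear precomposition and finite sums; (3) verify positive-definiteness of the Hessian on $\mathcal L(M,\T)$ via the trivial-common-kernel remark. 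This keeps the argument to a few lines.
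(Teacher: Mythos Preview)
Your proposal is correct and follows the same approach as the paper, which simply asserts that Propositions~\ref{covx} and~\ref{prop:strictconvex} ``yield'' the result without further detail. Your treatment is in fact more careful than the paper's: the kernel argument you give for positive-definiteness of the global Hessian (that every edge lies in some tetrahedron, so $\bigcap_\sigma \ker L_\sigma = 0$) is exactly the point one needs to pass from strict convexity of each summand to strict convexity of the sum, and the paper leaves this implicit.
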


For our purposes, we introduce the following functional.
\begin{defi} The functional $H:\R^E\to\R$ is defined as
 $$H(l) = cov(l) - 2\pi\sum_{i\in E} l_i,\quad l\in \R^E.$$
\end{defi}

By Proposition~\ref{prop:pp1}, we have the following.
\begin{prop}\label{prop:stc} The functional $H$ is a $C^1$-smooth convex function on $\R^E,$ which is smooth and locally strictly convex on $\mathcal{L}(M, \mathcal{T}).$
\end{prop}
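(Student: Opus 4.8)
The statement to prove is Proposition~\ref{prop:stc}: the functional $H(l) = cov(l) - 2\pi\sum_{i\in E} l_i$ is $C^1$-smooth and convex on $\R^E$, and is smooth and locally strictly convex on $\mathcal{L}(M,\mathcal{T})$. The entire argument is a direct consequence of Proposition~\ref{prop:pp1}, which already establishes exactly these regularity and convexity properties for $cov$ itself. The plan is to observe that the additional term $-2\pi\sum_{i\in E} l_i$ is a linear (hence affine) functional on $\R^E$, so adding it changes none of the relevant analytic properties: smoothness classes are preserved under adding a linear function, and convexity (even strict convexity, and positive-definiteness of the Hessian wherever it exists) is unaffected because a linear function has identically vanishing Hessian.

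First I would recall from Proposition~\ref{prop:pp1} that $cov$ is $C^1$ on all of $\R^E$ and $C^\infty$ with positive definite Hessian on the open subset $\mathcal{L}(M,\mathcal{T})$ (the latter because each $cov_{\hat\sigma}$ is locally strictly convex on $\mathcal{L}$ by Proposition~\ref{prop:strictconvex}, and $\mathcal{L}(M,\mathcal{T})$ is carved out so that every incident tetrahedron lies in $\mathcal{L}$). Next I would note that $L(l) := 2\pi\sum_{i\in E} l_i$ is linear in $l$, hence $C^\infty$ on $\R^E$ with $\nabla L \equiv (2\pi,\dots,2\pi)$ constant and Hessian $\mathrm{Hess}\,L \equiv 0$. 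Therefore $H = cov - L$ is $C^1$ on $\R^E$ as a difference of a $C^1$ function and a $C^\infty$ function; on $\mathcal{L}(M,\mathcal{T})$ it is $C^\infty$ as a difference of two $C^\infty$ functions, and $\mathrm{Hess}\,H = \mathrm{Hess}\,cov - 0 = \mathrm{Hess}\,cov$ is positive definite there, giving local strict convexity. Global convexity on $\R^E$ follows since $H$ is the sum of a convex function and a linear function, and linear functions are convex.

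There is essentially no obstacle here: the content of the proposition is entirely inherited from Proposition~\ref{prop:pp1}, and the role of this statement is bookkeeping — it records that the objective functional $H$ whose critical points are the zero-curvature metrics (note $\partial H/\partial l_i = \partial cov/\partial l_i - 2\pi = -\wt K_{e_i}(l)$ by the Schl\"afli formula, so $\nabla H = -\wt K$) has the same good convexity structure as $cov$. Accordingly I would present the proof in two or three sentences: invoke Proposition~\ref{prop:pp1}, observe that $l \mapsto 2\pi\sum_i l_i$ is linear, and conclude that subtracting it preserves $C^1$-smoothness, convexity on $\R^E$, and smoothness together with local strict convexity on $\mathcal{L}(M,\mathcal{T})$. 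The only thing worth being slightly careful about is to state explicitly why the Hessian is unchanged (because the Hessian of a linear function vanishes identically), which is what upgrades ``convex'' to ``locally strictly convex'' on $\mathcal{L}(M,\mathcal{T})$.
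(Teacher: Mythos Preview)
Your proposal is correct and matches the paper's approach exactly: the paper states that Proposition~\ref{prop:stc} follows immediately from Proposition~\ref{prop:pp1}, and your argument spells out precisely why (subtracting the linear functional $2\pi\sum_i l_i$ preserves $C^1$-smoothness, convexity, and local strict convexity since its Hessian vanishes). If anything, your write-up is more detailed than the paper's one-line justification.
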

For a tetrahedron $\hat{\sigma}\in T(\mathscr{T})$ with the generalized hyper-ideal metric given by $\hat{l},$ by the definitions of $cov,$ \eqref{cov} and \eqref{eq:mu1}, for any $\hat{e}\sim \hat{\sigma},$
$$\frac{\partial cov_{\hat{\sigma}}(\hat{l})}{\partial (\hat{l}(\hat{e}))}=\alpha(\hat{e}).$$
Hence for any $j\in E,$
\begin{eqnarray}
\frac{\partial H}{\partial l_j}
&=& \frac{\partial cov(l)}{\partial l_j} - 2\pi
=\frac{\partial}{\partial l_j}\left(\sum_{\sigma \in T} cov_{\hat{\sigma}}(\hat{l})\right)-2\pi\label{eq:deriv}\\
&=&\sum_{\substack{\hat{e}\in P_E^{-1}(j),\nonumber\\
 \hat{e}\sim \hat{\sigma}}}\frac{\partial cov_{\hat{\sigma}}(\hat{l})}{\partial (\hat{l}(\hat{e}))}\frac{\partial (\hat{l}(\hat{e}))}{\partial l_j}-2\pi\nonumber\\
&=& \sum_{\hat{e}\in P_E^{-1}(j)}\alpha(\hat e) - 2\pi\nonumber\\
&=& -\wt{K}_j,\nonumber
\end{eqnarray} where we have used $\hat{l}(\hat{e})=l_j$ for $\hat{e}\in P_E^{-1}(j).$
Since the generalized Ricci curvature $\wt{K}$ extends the Ricci curvature $K,$
for any $l\in \mathcal{L}(M,\T),$
\begin{equation}\label{eq:deriv1}
\frac{\partial H}{\partial l_j}=-K_j,\quad \forall j\in E.
\end{equation}

We introduce a combinatorial Ricci flow in $\mathcal{L}(M, \mathcal{T})$, an analog of Luo's Ricci flow \eqref{eq:luoflow},
\beq
\begin{cases}\label{kl}
\frac{d l_i(t)}{d t} = K_i(l(t)) l_i(t),\quad \forall i\in E, t\geq 0,\\
l(0) = l_0,
\end{cases}
\eeq
where $l_0\in\mathcal{L}(M, \mathcal{T})$ and $l(t)\in \mathcal{L}(M, \mathcal{T}),\forall t>0.$ Since $\mathcal{L}(M, \mathcal{T})$ is an open subset in $\R^E_{>0}$ and $K(l)$ is a smooth function on $\mathcal{L}(M, \mathcal{T}),$ Picard's theorem in ordinary differential equations yields the following.
\tm
For a closed pseudo 3-manifold $(M, \mathcal{T}),$ for any initial data $l_0\in\mathcal{L}(M, \mathcal{T}),$ the solution $\{l(t)|t\in [0, T )\}\subset \mathcal{L}(M, \mathcal{T})$ to the combinatorial Ricci flow \eqref{kl} exists and is unique on the maximal existence interval $[0, T )$ with $0 < T\leq \infty.$

\tmd

Now, we introduce the extended Ricci flow on $\R^E_{>0},$ see \eqref{eq:newflow},

\beq
\begin{cases}\label{exkl}
\frac{d l_i(t)}{d t} = \wt K_i(l(t)) l_i(t),\quad \forall i\in E, t\geq 0,\\
l(0) = l_0,
\end{cases}
\eeq
where $l_0\in\R^E_{>0}$ and $l(t)\in \R^E_{>0},\forall t>0.$

We prove the long-time existence and uniqueness of the extended Ricci flow.
\begin{proof}[Proof of Theorem~\ref{thm:gp1}]
Note that $\R^E_{>0}$ is an open subset in $\R^E$ and $\wt{K}(l)$ is a locally Lipschitz function on $\R^E_{>0};$ see Proposition~\ref{prop:lipc}. Picard's theorem in ordinary differential equations yields the local existence and uniqueness of the extended Ricci flow \eqref{exkl}, i.e. for any $l_0\in\R^E_{>0},$ the solution $\{l(t)|t\in [0, T )\}\subset \R^E_{>0}$ to the extended Ricci flow \eqref{exkl} exists and is unique on the maximal existence interval $[0, T )$ with $0 < T\leq \infty.$

It suffices to prove that $T=\infty.$
For any $l\in \R^E_{>0},$ by Definition~\ref{defi:pre},
$$2\pi-\pi d_i\leq \wt{K}_i(l)\leq 2\pi,\quad \forall i\in E,$$ where $d_i$ is the valence of the edge $i.$ Hence there exists a constant $C,$ depending on the triangulation $\T,$ such that for any $l\in \R^E_{>0},$
$$|\wt{K}_i(l)|\leq C,\quad \forall i\in E.$$ Hence, for any $t\in [0, T ),$
$$\left|\frac{d l_i(t)}{d t}\right|= |\wt K_i(l(t)) l_i(t)|\leq Cl_i(t),\quad \forall i\in E.$$
This yields that $$l_i(0)e^{-Ct}\leq l_i(t)\leq l_i(0)e^{Ct}, \quad \forall i\in E, t\geq 0.$$
By using a contradiction argument and the local existence result for any initial data, one can show that $T=\infty.$ This proves the result.

\end{proof}

By \eqref{eq:deriv}, the extended Ricci flow can be rewritten as
$$\frac{d l_i}{d t} = \wt K_i(l) l_i=-\frac{\partial H}{\partial l_i} l_i,\quad \forall i\in E, t\geq 0.$$ This implies that the extended Ricci flow is a variant of the negative gradient flow associated with the functional $H.$

\begin{prop}\label{H}
The functional $H$ is non-increasing along the extended Ricci flow (\ref{exkl}), i.e. for any solution $l(t)$ to the extended Ricci flow \eqref{exkl},
\[\frac{d H(l(t))}{d t} \le 0.\]
\end{prop}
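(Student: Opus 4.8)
The plan is to differentiate $H(l(t))$ along a solution of the extended Ricci flow and use the chain rule together with the identity $\partial H/\partial l_i = -\wt K_i$ established in \eqref{eq:deriv}. Concretely, for a solution $l(t)$ of \eqref{exkl} I would write
\[
\frac{d H(l(t))}{dt} = \sum_{i\in E} \frac{\partial H}{\partial l_i}\,\frac{dl_i(t)}{dt}
= \sum_{i\in E} \bigl(-\wt K_i(l(t))\bigr)\,\wt K_i(l(t))\,l_i(t)
= -\sum_{i\in E} \bigl(\wt K_i(l(t))\bigr)^2\, l_i(t).
\]
Since $l_i(t) > 0$ for all $i\in E$ and all $t$ (as the flow stays in $\R^E_{>0}$ by Theorem~\ref{thm:gp1}), every term in the last sum is nonnegative, so the derivative is $\le 0$, which is exactly the claim.

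The one point that needs a little care is the regularity: $H$ is only $C^1$ on $\R^E$ (Proposition~\ref{prop:stc}), and $\wt K$ is only locally Lipschitz (Proposition~\ref{prop:lipc}), not smooth, on all of $\R^E_{>0}$. So I would note that $t\mapsto H(l(t))$ is the composition of a $C^1$ function with a $C^1$ curve (the solution $l(t)$ is $C^1$ since its derivative $\wt K(l(t))l(t)$ is continuous in $t$), hence the chain rule above is valid in the classical sense and $H(l(t))$ is continuously differentiable in $t$. No appeal to second derivatives or to strict convexity is needed here; those are reserved for later arguments about uniqueness and convergence.

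The main obstacle is essentially bookkeeping rather than a genuine difficulty: one must invoke \eqref{eq:deriv} correctly, which already accounts for the fact that several edges $\hat e\in E(\mathscr T)$ project to the same edge $j\in E(\T)$ and contribute to $\wt K_j$ with the factor $d_j$ absorbed into the sum $\sum_{\hat e\in P_E^{-1}(j)}\alpha(\hat e)$. Once that identity is in hand the rest is immediate. I would also remark, for use downstream, that equality $\frac{dH(l(t))}{dt}=0$ holds at some time $t$ if and only if $\wt K_i(l(t))=0$ for all $i\in E$, i.e. $l(t)$ is a zero-curvature generalized hyper-ideal metric; this characterization of the stationary points of the flow is what makes $H$ a useful Lyapunov functional.
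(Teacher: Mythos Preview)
Your proof is correct and is exactly the paper's argument: apply the chain rule with \eqref{eq:deriv} to get $\frac{dH}{dt}=-\sum_{i\in E}\wt K_i^2\, l_i\le 0$. Your additional remarks on the $C^1$ regularity justifying the chain rule and on the equality case are sound and go slightly beyond what the paper spells out, but the core computation is identical.
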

\begin{proof}
By direct calculation and \eqref{eq:deriv},
$$\frac{d H}{d t} = -\sum_i \wt{K}_i^2 l_i \le 0.$$
\end{proof}

In the following, we prove that if the extended Ricci flow converges, then the limit is a generalized hyper-ideal metric with zero Ricci curvature.
\tm\label{thm:zero1}
Let $\{l(t)| t\in[0,\infty)\}$ be the solution to the extended Ricci flow \eqref{exkl}. Suppose that there exists $\bar l\in \R^E_{> 0}$ such that $$l(t)\to \bar{l},\quad t\to\infty.$$ Then
$\wt{K}(\bar l) = 0$.
\tmd
\begin{proof}
By Proposition \ref{H}, $H(l(t))$ is non-increasing. Note that $$l(t) \to \bar l, t \to \infty,\quad \bar l\in \R^E_{> 0}.$$ Since $H$ is continuous on $\R^E_{>0},$ the set
$\{H(l(t)) : t \ge 0\}\subset \R$ is bounded.
Hence the following limit exists and is finite,
\[\lim_{t \to \infty} H(l(t)) =A.\]
Consider the sequence $\{H(l(n))\}^\infty_{n=1}$. By the mean value theorem, for any
$n \ge 1$ there exists $t_n \in (n, n + 1)$ such that
\beq\label{eq:qqq1}
H(l(n + 1)) - H(l(n)) =\frac{d}{dt}\big |_{t = t_n} H(l(t)) = - \sum_i \wt{K}_i^2(l(t_n))l_i(t_n).
\eeq

Note that $\lim_{n \to \infty} H(l(n + 1)) - H(l(n)) = 0$. By passing to the limit, $n\to\infty,$ in \eqref{eq:qqq1}, noting that $\wt{K}(l)$ is continuous in $\R^E_{>0},$ we have
$$ - \sum_i \wt{K}_i^2(\bar{l})\bar{l}_i=0.$$ This yields that $\wt{K}(\bar l)=0.$

\end{proof}


The uniqueness of the hyper-ideal metric with zero Ricci curvature was proved by Luo and Yang \cite[Theorem~1.2]{[LY]}. {Similar rigidity results can be seen in \cite{[Cho],[Luo2],[W]}.}
\tm\label{thm:uniquezero} Let $(M, \mathcal{T})$ be a closed pseudo 3-manifold. Suppose that there exists a hyper-ideal metric $\hat{l} \in \mathcal{L}(M, \mathcal{T})$ such that $K(l) = 0$, then $l$ is the unique metric with zero Ricci curvature in the class $\R^E_{\geq 0},$ i.e. if $\wt{K}(\bar l) = 0$ for some $\bar l\in \R^E_{\geq 0},$ then $\bar l=l.$ Moreover, $$\lim_{l\in \R^E, l\to\infty} H(l)=+\infty.$$
\tmd

\begin{proof} For readers' convenience, we include the proof here.
We claim that for any $\bar l\in \R^E_{\geq 0}\setminus \R^{E}_{>0},$
$\wt{K}(\bar l)\neq 0.$ Note that there exists some $i\in E$ such that $\bar l_i=0.$ For any $\hat{e}\in P_E^{-1}(i),$ and $\hat{e}\sim \hat{\sigma}\in T(\mathscr{T}),$ by \eqref{angle} and  Definition~\ref{defi:dihe}, we get $\alpha(\hat{e})=0.$ This yields
$$\wt{K}_i(\bar l)=2\pi-\sum_{\hat{e}\in P_E^{-1}(e)}\alpha(\hat{e})=2\pi\neq 0.$$ This proves the claim.

Hence it suffices to prove the result for $\bar l\in \R^{E}_{>0}.$
We consider the functional $H:\R^E\to \R.$ By Proposition~\ref{prop:stc}, it is $C^1$-smooth and convex on $\R^E,$ and is smooth and strictly convex on $\mathcal{L}(M, \mathcal{T}).$ By \eqref{eq:deriv}, the metrics in $\R^E_{>0}$ with zero Ricci curvature correspond to the critical points of the functional $H.$ Note that any critical point of a convex function on a convex domain is a minimizer. So that the zero-curvature metric $\hat{l}$ is a minimizer of $H.$ Moreover, $l\in \mathcal{L}(M, \mathcal{T})$ and $H$ is strictly convex on $\mathcal{L}(M, \mathcal{T}).$ This is a unique minimizer, also a unique critical point, of $H$ on $\R^E.$  By the strict convexity on $\mathcal{L}(M, \mathcal{T}),$ we have $$\lim_{l\in \R^E, l\to\infty} H(l)=+\infty.$$ This proves the result.
\end{proof}

\section{Proof of Main Theorems}
In this section, we prove the main results of the paper. 
For that purpose, we first obtain the uniform bounds for the solution to the extended Ricci flow \eqref{exkl} under some proper conditions on combinatorial information of the triangulation and the initial data.

In the following, we prove the upper bound estimate for the solution to the extended Ricci flow.

\begin{theo}\label{LONG}
Let $(M, \T)$ be a closed pseudo 3-manifold satisfying that $d_i \ge 10$ for all $i\in E.$ Let $\{l(t)|t\in[0,\infty)\}\subset \R^E_{>0}$ be the solution to the extended Ricci flow \eqref{exkl} with initial data $l_0\in (0,\arccosh 3)^E.$ Then for any $t\geq 0,$ $$l(t)\in (0,\arccosh 3)^E.$$ In particular, $l(t)\in \mathcal{L}(M,\T),$ for any $t\geq 0.$
\end{theo}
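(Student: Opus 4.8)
The plan is to prove that the box $(0,\arccosh 3)^E$ is forward invariant under the flow \eqref{exkl} by a first-exit-time (barrier) argument: at the first instant an edge length could reach the threshold $\arccosh 3$, that edge becomes a longest edge in every incident tetrahedron, Corollary~\ref{lem:longest} forces a dihedral angle strictly larger than $\frac\pi5$ there, and the hypothesis $d_e\ge 10$ then makes $\wt K_e$ strictly negative, so the length is actually decreasing — contradiction. Concretely, I would set
$$T^\ast=\sup\{t\ge 0:\ l(s)\in(0,\arccosh 3)^E\ \text{for all}\ s\in[0,t]\}.$$
Since $l_0$ lies in the open box and $l(\cdot)$ is $C^1$ on $[0,\infty)$ by Theorem~\ref{thm:gp1}, we have $T^\ast>0$; and the lower bound $l_i(t)\ge l_i(0)e^{-Ct}>0$ from the proof of Theorem~\ref{thm:gp1} shows the trajectory can only escape the box through some edge length hitting $\arccosh 3$. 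The goal is $T^\ast=\infty$, so assume for contradiction that $T^\ast<\infty$.

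Next I would analyze the configuration at $t=T^\ast$. By continuity of $l(\cdot)$ and the definition of $T^\ast$ we get $l(T^\ast)\in(0,\arccosh 3]^E$, and in fact $\max_{i\in E}l_i(T^\ast)=\arccosh 3$: otherwise $l(T^\ast)$ would lie in the open box and, by continuity, so would $l(t)$ on a neighbourhood of $T^\ast$, contradicting the definition of $T^\ast$. Pick $e\in E$ with $l_e(T^\ast)=\arccosh 3$. For any tetrahedron $\hat\sigma\in T(\mathscr T)$ incident to an edge $\hat e\in P_E^{-1}(e)$, the six edge lengths of $\hat\sigma$ at time $T^\ast$ are all $\le\arccosh 3$ while the one at $\hat e$ equals $\arccosh 3$; hence $\hat e$ is a longest edge of $\hat\sigma$ and Corollary~\ref{lem:longest} applies with $l_1=\arccosh 3$, yielding $\alpha(\hat e)>\frac\pi5$. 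Summing over the $d_e\ge 10$ edges in $P_E^{-1}(e)$ and invoking Definition~\ref{defi:pre},
$$\wt K_e(l(T^\ast))=2\pi-\sum_{\hat e\in P_E^{-1}(e)}\alpha(\hat e)<2\pi-d_e\cdot\frac\pi5\le 2\pi-10\cdot\frac\pi5=0.$$

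Then I would close the contradiction using the flow equation: $l_e'(T^\ast)=\wt K_e(l(T^\ast))\,l_e(T^\ast)<0$, so by the definition of the derivative $l_e(s)>l_e(T^\ast)=\arccosh 3$ for $s$ slightly less than $T^\ast$, contradicting $l_e(s)<\arccosh 3$ for all $s\in[0,T^\ast)$. Hence $T^\ast=\infty$, i.e. $l(t)\in(0,\arccosh 3)^E$ for all $t\ge 0$. For the last assertion, at each fixed $t$ every tetrahedron of $\mathscr T$ carries an edge length vector with all entries $\le\arccosh 3$, so Theorem~\ref{Range} makes it a genuine hyper-ideal tetrahedron; therefore $l(t)\in\mathcal L(M,\T)$ for all $t\ge 0$.

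I do not expect a serious obstacle: the estimate $\alpha_1>\frac\pi5$ for the longest edge of length $\arccosh 3$ in Corollary~\ref{lem:longest} is precisely calibrated so that $d_e\ge 10$ produces negative curvature, and forward invariance then follows from the soft continuity argument above. The only points demanding care are (a) checking that the maximal edge length is attained at $T^\ast$ and that a realizing edge is simultaneously a longest edge in each incident tetrahedron, so that Corollary~\ref{lem:longest} is legitimately applicable there, and (b) that the solution is $C^1$ up to and including $t=T^\ast$, which is guaranteed by the global existence in Theorem~\ref{thm:gp1}.
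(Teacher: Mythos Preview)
Your proof is correct and follows essentially the same route as the paper: a first-exit-time argument where, at the first time some edge reaches $\arccosh 3$, that edge is longest in every incident tetrahedron, Corollary~\ref{lem:longest} gives $\alpha>\frac{\pi}{5}$, the valence bound $d_e\ge 10$ forces $\wt K_e<0$, and the resulting sign contradiction on $l_e'$ closes the argument; the final assertion is likewise obtained from Theorem~\ref{Range}. The only cosmetic difference is that the paper phrases the contradiction as $l_j'(t_0)\ge 0$ versus $\wt K_j l_j<0$, while you phrase it as $l_e'(T^\ast)<0$ forcing $l_e(s)>\arccosh 3$ for $s<T^\ast$.
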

\begin{proof} It suffices to prove that for any $t\geq 0,$
$$l(t)<\arccosh 3.$$
Suppose that it is not true, then
$$K:=\{t\in [0,\infty): \max_{i\in E} l_i(t)\geq \arccosh 3\}$$ is a non-empty, closed subset of $\R.$ Set $t_0:=\inf K.$ Since $l_0\in (0,\arccosh 3)^E,$ $0\not\in K.$ Hence $0<t_0<\infty.$ There exists some $j\in E$ such that
$$l_j(t_0)=\max_{i\in E} l_i(t_0).$$ Note that $l_j(t_0)=\arccosh 3$ and $$l_j(t)\leq \max_{i\in E} l_i(t)<\arccosh 3,\quad \forall\ t<t_0.$$ This yields that $$l_j'(t_0)\geq 0.$$ For any $\hat{e}\in P_E^{-1}(j),$ and $\hat{e}\sim \hat{\sigma}\in T(\mathscr{T}),$ by Corollary~\ref{lem:longest} and $l_j(t_0)=\max_{i\in E} l_i(t_0)=\arccosh 3,$ $$\alpha(\hat{e})>\frac{\pi}{5}.$$
By $d_i \ge 10$ for all $i\in E,$ we have
$$\wt{K}_j(l(t_0))=2\pi-\sum_{\hat{e}\in P_E^{-1}(e)}\alpha(\hat{e})<2\pi-\frac{\pi}{5}d_j\leq 0.$$ This yields the following contradiction, $$0\leq l_j'(t_0)=\wt{K}_j (l(t_0))l_j(t_0)<0.$$ This proves the first statement.

The last statement follows from Theorem~\ref{Range}.
\end{proof}

Next, we prove the lower bound estimate for the solution to the extended Ricci flow.
\begin{theo}\label{sbd}
Let $(M, \T)$ be a closed pseudo 3-manifold, and $\{l(t)|t\in[0,\infty)\}\subset \R^E_{>0}$ be the solution to the extended Ricci flow \eqref{exkl} with initial data $l_0.$ Suppose that there exists a constant $C$ such that $$l(t)\in (0,C]^E,\quad \forall t\geq 0.$$ Then there exists a positive constant
$c(l_0,C,\max_{i\in E}d_i),$ depending on $l_0,$ $C$ and $\max_{i\in E}d_i,$ such that 
\begin{equation}\label{eq:spq1}l(t)\in (c,C]^E,\quad \forall t\geq 0.\end{equation}
\end{theo}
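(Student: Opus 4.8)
The plan is to obtain a uniform lower bound for the edge lengths along the flow by exploiting the gradient-flow structure and the coercivity properties of the functional $H$ that are available under the standing hypothesis $l(t)\in(0,C]^E$. The starting point is Proposition~\ref{H}: along the extended Ricci flow, $\frac{d}{dt}H(l(t))\le 0$, so $H(l(t))\le H(l_0)$ for all $t\ge 0$. Thus the entire trajectory stays in the sublevel set $\{l\in(0,C]^E:H(l)\le H(l_0)\}$. The claim \eqref{eq:spq1} will follow once we show that this sublevel set, intersected with the box $(0,C]^E$, is bounded away from the boundary faces $\{l_i=0\}$ — i.e. that $H(l)\to+\infty$ as $l_i\to 0^+$ for any $i$, uniformly on the compact set $\overline{(0,C]^E}$ minus a neighborhood of those faces. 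Granting this, we simply set $c$ to be small enough that $H>H(l_0)$ whenever $\min_i l_i<c$ (and $l\in(0,C]^E$), which forces $l(t)\in(c,C]^E$.

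So the heart of the matter is a quantitative estimate: as the length $l_i$ of one edge tends to $0$ while all other lengths stay bounded by $C$, the contribution to $H$ blows up to $+\infty$. First I would reduce this to a single generalized hyper-ideal tetrahedron. Recall $H(l)=\mathrm{cov}(l)-2\pi\sum_i l_i=\sum_{\sigma\in T}\mathrm{cov}_{\hat\sigma}(\hat l)-2\pi\sum_i l_i$. The linear term $-2\pi\sum_i l_i$ is bounded below on $(0,C]^E$, and each $\mathrm{cov}_{\hat\sigma}$ is convex, so it suffices to show that for a tetrahedron $\hat\sigma$ incident to the edge $i$, $\mathrm{cov}_{\hat\sigma}(\hat l)\to+\infty$ as the corresponding edge length $\to 0$ with the other five lengths bounded. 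Using $\frac{\partial\,\mathrm{cov}_{\hat\sigma}}{\partial l_{jk}}=\alpha_{jk}$ and integrating in the $l_{jk}$-direction, this amounts to showing $\int_0^{\epsilon}\alpha_{jk}(\text{length}=s,\ \text{others fixed})\,\frac{ds}{?}$ — more precisely, it amounts to a lower bound on the dihedral angle $\alpha$ at a \emph{short} edge. This is exactly what Proposition~\ref{prop:c0} provides but in the wrong direction: that result says $\alpha\to 0$ as the edge shrinks. The relevant fact instead is that the dihedral angle at a short edge blows up, or rather that $\mathrm{cov}_{\hat\sigma}$ behaves like $-(\text{const})\, l_{jk}\log l_{jk}$ type terms near $l_{jk}=0$; the cleanest route is to use the explicit value $\mathrm{cov}(0,\dots,0)=16\Lambda(\pi/4)$ together with the convexity in Proposition~\ref{covx} and a direct examination of $\alpha_{ij}$ near the face $l_{ij}=0$.

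An alternative and probably cleaner route, which I would pursue in parallel, avoids volume asymptotics entirely and argues directly on the flow. Fix an edge $i$ and let $\hat e\in P_E^{-1}(i)$ with $\hat e\sim\hat\sigma$. When $l_i$ is very small and all other $l_j\le C$, Proposition~\ref{prop:c0} gives that the dihedral angle $\alpha(\hat e)$ at $\hat e$ is close to $0$ — but I actually want the angles at the \emph{other} edges of $\hat\sigma$. The point is that when one edge of a generalized hyper-ideal tetrahedron is short, by \eqref{angle} one can compute or estimate the $\phi_{jk}$ for the edges $e_{jk}$ adjacent to the short edge, and these stay bounded away from $1$; so $\widetilde K_i(l)=2\pi-\sum_{\hat e\in P_E^{-1}(i)}\alpha(\hat e)$. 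If every $\alpha(\hat e)$ is small (close to $0$) when $l_i\to 0$, then $\widetilde K_i(l)$ is close to $2\pi>0$, hence $\frac{d}{dt}l_i=\widetilde K_i(l)\,l_i>0$, so $l_i$ is increasing on the region where it is small. Quantitatively: choose $\delta>0$ (via Proposition~\ref{prop:c0} with $C$ and $\epsilon=\pi$, say) so that whenever $\cosh l_i\le 1+\delta$ and $\cosh l_j\le C'$ for $j\ne i$ we have $\alpha(\hat e)\le\pi/(2\max_k d_k)$, whence $\widetilde K_i(l)\ge 2\pi-\frac{\pi}{2}=\frac{3\pi}{2}>0$; then $l_i(t)$ can never drop below $\min\{l_i(0),\,\mathrm{arccosh}(1+\delta)\}$, because at the first moment it reaches that level it would have positive derivative. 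Taking $c=\min_{i\in E}\min\{l_i(0),\,\mathrm{arccosh}(1+\delta)\}$ gives the bound, and $c$ depends only on $l_0$, $C$ (through $C'=\cosh C$), and $\max_i d_i$.

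The main obstacle, and the step requiring care, is verifying that when $l_i\to 0$ with the other lengths in $(0,C]$, \emph{all} of the extended dihedral angles at the copies $\hat e\in P_E^{-1}(i)$ tend to $0$ — equivalently, that $\phi_{i}\to 1$ uniformly over the admissible configurations of the other five lengths. Proposition~\ref{prop:c0} is stated exactly as $x_1\to 1$ with $\max_{2\le j\le 6}x_j\le C$, so this is essentially immediate once we match notation (the short edge plays the role of $e_1=e_{12}$), but one must be slightly careful that the bound is uniform in which tetrahedron and which edge-copy we look at — this is fine because there are only finitely many tetrahedra and the constant $\delta$ from Proposition~\ref{prop:c0} depends only on $C$ and $\epsilon$. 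A secondary technical point is the standard ODE continuation argument (a first-time-of-exit argument, as already used in the proof of Theorem~\ref{LONG}): one supposes for contradiction that $K'=\{t:\min_i l_i(t)\le c\}$ is nonempty, sets $t_0=\inf K'$, notes $t_0>0$ since $c<\min_i l_i(0)$, picks the edge $j$ achieving the minimum at $t_0$, derives $l_j'(t_0)\le 0$, and contradicts it with $l_j'(t_0)=\widetilde K_j(l(t_0))l_j(t_0)>0$ from the angle estimate above.
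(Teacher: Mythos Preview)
Your ``alternative route'' is correct and is essentially the paper's proof: pick $\epsilon_0=\pi/\max_i d_i$, apply Proposition~\ref{prop:c0} with the bound $\cosh C$ to get $\delta$, set $c=\min\{\frac12\min_i l_{0,i},\arccosh(1+\delta)\}$, and run the first-time-of-exit argument exactly as you describe (and as in Theorem~\ref{LONG}). Two minor points: your parenthetical ``$\epsilon=\pi$, say'' is a slip --- you need $\epsilon=\pi/(2\max_k d_k)$ (or $\pi/\max_k d_k$) in Proposition~\ref{prop:c0} to get the angle bound you then use; and your momentary detour about ``the angles at the \emph{other} edges'' is unnecessary --- it is precisely the angles at the copies of the short edge $i$ that are small, which is what Proposition~\ref{prop:c0} gives directly.

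Your first route, via coercivity of $H$ as $l_i\to 0^+$, does not work and you are right to abandon it: since $\partial H/\partial l_i=-\wt K_i\to -2\pi$ as $l_i\to 0$ (the angles at the short edge vanish), $H$ stays bounded on $(0,C]^E$ and does not blow up near $\{l_i=0\}$. So the sublevel-set argument cannot by itself keep the flow away from the boundary; the ODE barrier argument is what actually does the job.
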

\begin{proof} 
Set $\epsilon_0:=\frac{\pi}{\max_{i\in E}d_i}.$
Take $$c:=\min\left\{\frac12 \min_{i\in E}l_{0,i},\arccosh(1+\delta)\right\},$$ where $\delta=\delta(\cosh C,\epsilon_0)$ is the constant given in Proposition~\ref{prop:c0}.  We want to prove that \eqref{eq:spq1} holds for the above constant $c.$ Suppose that it is not true, then
$$Q:=\{t\in [0,\infty): \min_{i\in E} l_i(t)\leq c\}$$ is a non-empty, closed subset of $\R.$ Set $t_0:=\inf Q.$ Since \eqref{eq:spq1} holds for $t=0,$ $0\not\in Q.$ Hence $0<t_0<\infty.$ There exists some $j\in E$ such that
$$l_j(t_0)=\min_{i\in E} l_i(t_0).$$ Note that $l_j(t_0)=c$ and $$l_j(t)\geq \min_{i\in E} l_i(t)>c,\quad \forall\ t<t_0.$$ This yields that $$l_j'(t_0)\leq 0.$$ Consider any $\hat{e}\in P_E^{-1}(j),$ and $\hat{e}\sim \hat{\sigma}\in T(\mathscr{T}),$ endowed with the metric given by $\hat{l}(t_0)=l(t_0)\circ P_E.$ Note that $l(t_0)\in (0,C]^E$ and $$\cosh(l_j(t_0))=\cosh  c \le 1+\delta.$$ By Proposition~\ref{prop:c0}, we have under the metric  $l(t_0),$ $$\alpha(\hat{e})<\epsilon_0.$$
Hence
$$\wt{K}_j(l(t_0))=2\pi-\sum_{\hat{e}\in P_E^{-1}(e)}\alpha(\hat{e})>2\pi-d_j\epsilon_0\geq \pi> 0.$$ This yields the following contradiction, $$0\geq l_j'(t_0)=\wt{K}_j (l(t_0))l_j(t_0)>0.$$ This proves the result.

\end{proof}

By the same argument as above, we can prove a quantitative lower bound estimate for the solution to the extended Ricci flow.
We denote $\mathbbm{1}_E$ by the constant function $1$ on $E,$ i.e.
$$\mathbbm{1}_E=(1,1,\cdots,1).$$
\begin{theo}\label{bd}
Let $(M, \T)$ be a closed pseudo 3-manifold satisfying that $d_i \ge 10$ for all $i\in E.$ Let $\{l(t)|t\in[0,\infty)\}\subset \R^E_{>0}$ be the solution to the extended Ricci flow \eqref{exkl} with initial data $l_0\in (0,\arccosh 3)^E.$ Then there exists a positive constant
$c(l_0,\max_{i\in E}d_i)$ such that\begin{equation}\label{eq:pq1}l(t)\in (c,\arccosh 3)^E, \quad \forall t\geq 0.\end{equation}
In particular, if $l_0=\frac{\arccosh 3}{2} \mathbbm{1}_E,$ then the above constant $c$ can be chosen as $$c(l_0,\max_{i\in E}d_i)=\frac{1}{3 (\max_{i\in E}d_i)}.$$ \end{theo}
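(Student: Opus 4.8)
The plan is to follow exactly the strategy already used in Theorem~\ref{LONG} and Theorem~\ref{sbd}, but now keeping explicit track of the constants so that the claimed quantitative lower bound comes out. First I would invoke Theorem~\ref{LONG} directly: since $d_i\ge 10$ for all $i\in E$ and $l_0\in(0,\arccosh 3)^E$, we already know $l(t)\in(0,\arccosh 3)^E\subset(0,C]^E$ for all $t\ge 0$ with $C=\arccosh 3$. Then Theorem~\ref{sbd}, applied with this $C$, gives a positive constant $c(l_0,\arccosh 3,\max_{i\in E}d_i)$ with $l(t)\in(c,\arccosh 3)^E$; renaming this constant $c(l_0,\max_{i\in E}d_i)$ (the dependence on $C=\arccosh 3$ being absorbed) proves \eqref{eq:pq1}. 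So the first assertion is essentially immediate from the two preceding theorems.

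The substance is the ``in particular'' clause, where $l_0=\tfrac{\arccosh 3}{2}\mathbbm{1}_E$ and we must show $c=\tfrac{1}{3\max_{i\in E}d_i}$ works. Here I would re-run the barrier/first-time argument of Theorem~\ref{sbd} with this explicit constant in place of the abstract $c$. Set $D:=\max_{i\in E}d_i$ and $c:=\tfrac{1}{3D}$. One checks two things at the candidate ``first violation time'' $t_0:=\inf\{t:\min_{i\in E}l_i(t)\le c\}$, where some edge $j$ attains $l_j(t_0)=c$ and $l_j'(t_0)\le 0$. (a) The initial datum satisfies the hypothesis that the bound holds at $t=0$: since $\arccosh 3>1$, we have $c=\tfrac1{3D}<\tfrac13<\tfrac{\arccosh 3}{2}=l_{0,i}$ for every $i$, so $0\notin Q$ and $0<t_0<\infty$. (b) At $t_0$ the curvature at edge $j$ is strictly positive, giving the contradiction $0\ge l_j'(t_0)=\wt K_j(l(t_0))\,l_j(t_0)>0$. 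For (b) I would use Proposition~\ref{prop:c0} with $C=3$ (the upper bound $\cosh l_i(t_0)\le\cosh(\arccosh 3)=3$ from \eqref{eq:pq1}) and $\epsilon_0:=\pi/D$: the proposition's quantitative form says that if $\cosh l_j\le 1+\delta$ with $\delta=\tfrac{1}{5\pi^2}\epsilon_0^2$, then every dihedral angle $\alpha(\hat e)$ at $\hat e\in P_E^{-1}(j)$ is $\le\epsilon_0$, hence
\[
\wt K_j(l(t_0))=2\pi-\sum_{\hat e\in P_E^{-1}(j)}\alpha(\hat e)\ \ge\ 2\pi-d_j\epsilon_0\ \ge\ 2\pi-\pi=\pi>0.
\]
So the only real computation is verifying $\cosh c\le 1+\delta$ with $c=\tfrac{1}{3D}$ and $\delta=\tfrac{\epsilon_0^2}{5\pi^2}=\tfrac{1}{5D^2}$; this is an elementary estimate, using $\cosh x\le 1+x^2$ for $|x|\le 1$ (valid since $c\le\tfrac13<1$), which gives $\cosh c\le 1+\tfrac{1}{9D^2}\le 1+\tfrac{1}{5D^2}=1+\delta$.

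The main obstacle, such as it is, is purely bookkeeping: one must make sure the explicit $c=\tfrac{1}{3D}$ is simultaneously small enough that $\cosh c\le 1+\delta$ (so Proposition~\ref{prop:c0} applies and forces small angles) and, trivially, smaller than the initial value $\tfrac{\arccosh 3}{2}$ so the first-violation-time argument gets off the ground. No genuinely new idea is needed beyond Theorems~\ref{LONG}, \ref{sbd} and Proposition~\ref{prop:c0}; the point of stating it separately is to record a concrete, triangulation-computable lower bound for use in Theorem~\ref{thm:main10}.
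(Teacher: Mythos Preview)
Your proposal is correct and follows essentially the same approach as the paper: invoke Theorem~\ref{LONG} for the upper bound, Theorem~\ref{sbd} for the qualitative lower bound, and for the explicit constant rerun the barrier argument of Theorem~\ref{sbd} using the quantitative $\delta(3,\epsilon_0)=\tfrac{1}{5D^2}$ from Proposition~\ref{prop:c0}. The only cosmetic difference is that the paper sets the barrier at $c_1=\arccosh(1+\delta)$ and then checks $c_1\ge\sqrt{\delta}\ge\tfrac{1}{3D}$, whereas you set the barrier directly at $\tfrac{1}{3D}$ and verify $\cosh\tfrac{1}{3D}\le 1+\tfrac{1}{9D^2}\le 1+\delta$; these are equivalent computations (and note that the upper bound $\cosh l_i(t_0)\le 3$ you need should be cited from Theorem~\ref{LONG}, not from \eqref{eq:pq1} itself).
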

\begin{proof} By Theorem~\ref{LONG}, $$l(t)\in (0,\arccosh 3)^E,\quad \forall t\geq 0.$$
Applying Theorem~\ref{sbd} for $C=\arccosh 3,$ we get the result \eqref{eq:pq1}.

Let $l_0=\frac{\arccosh 3}{2} \mathbbm{1}_E.$ Set $\epsilon_0:=\frac{\pi}{\max_{i\in E}d_i}.$
By the same argument as in the proof of Theorem~\ref{sbd}, we prove that $$l(t)\in (c_1,\arccosh 3)^E, \quad \forall t\geq 0,$$ where $c_1=\min\left\{\frac{\arccosh 3}{4},\arccosh(1+\delta)\right\}.$ Here $\delta=\delta(3,\epsilon_0)=\frac{1}{5(\max_{i\in E}d_i)^2}\leq \frac{1}{500}.$
Note that $$c_1=\arccosh(1+\delta)\geq \sqrt{\delta}\geq \frac{1}{3\max_{i\in E}d_i}.$$
Hence we can choose $c=\frac{1}{3\max_{i\in E}d_i}$ as a new lower bound in \eqref{eq:pq1}.
\end{proof}

We recall a well-known lemma in the theory of ordinary differential equations.
\begin{lem}[\cite{[P1]}]\label{lem:conv} Let $V$ be an open set in $\R^n$ and $f\in C^1(V,\R^n).$ Consider an autonomous ordinary differential system
\begin{equation}\label{eq:5-1-2}\frac{d}{dt}{{x}(t)}={f}({x}(t)),~~~{x}(t)\in V.\end{equation}
Assuming ${x}^*\in V$ is a critical point of $f$, i.e. ${f}({x}^*)=0$. If all the eigenvalues of the Jacobian matrix $\frac{\partial {f}}{\partial{x}}({x}^*)$ have negative real part, then ${x}^*$ is an asymptotically stable point. More specifically, there exists a neighbourhood $\wt{V}\subset V$ of ${x}^*$, such that for any initial ${x}(0)\in \wt{V}$, the solution ${x}(t)$ to the equation \eqref{eq:5-1-2} exists for all time $t\in[0,\infty)$ and converges exponentially fast to ${x}^*$.
\end{lem}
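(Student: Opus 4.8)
The plan is to reduce the statement to the linearization of $f$ at $x^\ast$ and then close the argument with Gr\"onwall's inequality together with a continuation (bootstrap) step. First I would translate coordinates so that $x^\ast=0$ and use the $C^1$ hypothesis to write $f(x)=Ax+g(x)$ with $A=\frac{\partial f}{\partial x}(0)$, where $g\in C^1$ satisfies $g(0)=0$ and $\frac{\partial g}{\partial x}(0)=0$. The mean value inequality then gives, for every $\varepsilon>0$, a radius $r=r(\varepsilon)>0$ with $\{x:\|x\|\le r\}\subset V$ and $\|g(x)\|\le\varepsilon\|x\|$ whenever $\|x\|\le r$.

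The second ingredient is the standard exponential decay estimate for the linear flow generated by $A$: since every eigenvalue of $A$ has negative real part, for any $\beta$ with $0<\beta<-\max_j\operatorname{Re}\lambda_j(A)$ there is a constant $M\ge 1$ such that $\|e^{tA}\|\le Me^{-\beta t}$ for all $t\ge 0$. I would obtain this by putting $A$ into Jordan form and absorbing the polynomial factors coming from nontrivial Jordan blocks into the spectral gap; equivalently, one may solve the Lyapunov equation $A^{\top}P+PA=-I$ for a positive definite symmetric $P$ and work with the strict Lyapunov function $x\mapsto x^{\top}Px$.

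Third, I would combine these two facts through the variation-of-parameters formula $x(t)=e^{tA}x(0)+\int_0^t e^{(t-s)A}g(x(s))\,ds$. Fix $\varepsilon$ small enough that $M\varepsilon<\beta$, take $r=r(\varepsilon)$ as above, and set $\wt{V}:=\{x:\|x\|<r/M\}$, shrinking it if necessary so that $\wt{V}\subset V$. For initial data $x(0)\in\wt{V}$, let $T^\ast$ be the supremum of times $T$ for which the solution exists on $[0,T]$ and remains in $\{x:\|x\|\le r\}$. On $[0,T^\ast)$ the Duhamel formula and the two estimates above yield $\|x(t)\|\le Me^{-\beta t}\|x(0)\|+M\varepsilon\int_0^t e^{-\beta(t-s)}\|x(s)\|\,ds$; setting $u(t)=e^{\beta t}\|x(t)\|$ and applying Gr\"onwall gives $\|x(t)\|\le M\|x(0)\|e^{-(\beta-M\varepsilon)t}$ on $[0,T^\ast)$. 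Since $M\|x(0)\|<r$ and the exponent is negative, the right-hand side stays strictly below $r$, so the solution never reaches $\|x\|=r$, remains in a compact subset of $V$, and therefore continues to all of $[0,\infty)$; hence $T^\ast=\infty$. This yields simultaneously the global existence and the exponential convergence $x(t)\to x^\ast$ with rate $\beta-M\varepsilon>0$.

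I expect no genuine obstacle here. The one point needing care is the linear-algebra estimate $\|e^{tA}\|\le Me^{-\beta t}$ for a Hurwitz matrix $A$, and the one piece of bookkeeping is the continuation step that promotes the \emph{a priori} Gr\"onwall bound on $[0,T^\ast)$ to the global statement; both are classical, which is why the lemma is stated here with the single reference \cite{[P1]}.
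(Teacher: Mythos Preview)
Your argument is correct and is the standard linearization proof of Lyapunov's asymptotic stability theorem. Note, however, that the paper does not supply its own proof of this lemma: it is stated as a well-known result and simply attributed to the reference \cite{[P1]}, so there is no ``paper's proof'' to compare against beyond that citation.
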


Next, we prove the characterization of the convergence of the extended Ricci flow. 
\begin{proof}[Proof of Theorem~\ref{thm:expc}] By Theorem~\ref{thm:zero1}, the limit metric has zero Ricci curvature if the extended Ricci flow \eqref{eq:newflow} converges. We prove the other direction. Suppose that there exists a zero-curvature hyper-ideal metric $\hat{l},$ then for any initial data $l_0\in \R^E_{>0}$ the extended Ricci flow $l(t)$ converges to $\hat{l}$ exponentially fast.

By Theorem~\ref{thm:uniquezero}, the functional $H:\R^E\to \R$ is proper and bounded from below.
Since $H(l(t))$ is non-increasing by the same argument as in Proposition \ref{H},
$\{H(l(t)) : t \ge 0\}\subset \R$ is bounded. Hence $\{l(t) : t \ge 0\}$ is a bounded set in $\R^E_{>0}.$ By Theorem~\ref{sbd}, there exist positive constants $c,C$ such that \begin{equation}\label{eq:dd1}l(t)\in [c,C]^E,\quad t\geq 0.\end{equation}

Note that the following limit exists and is finite,
\[\lim_{t \to \infty} H(l(t)) = {A.}\]
Consider the sequence $\{H(l(n))\}^\infty_{n=1}$. By the mean value theorem, for any
$n \ge 1$ there exists $t_n \in (n, n + 1)$ such that
\beq\label{eq:qaaa1}
H(l(n + 1)) - H(l(n)) =\frac{d}{dt}\big |_{t = t_n} H(l(t)) = - \sum_i \wt{K}_i^2(l(t_n))l_i(t_n).
\eeq
Note that $\lim_{n \to \infty} H(l(n + 1)) - H(l(n)) = 0$ and the estimate \eqref{eq:dd1} holds. Passing to the limit, $n\to\infty,$ in \eqref{eq:qaaa1}, we have
\begin{equation}\label{eq:qmm2}\lim_{n\to \infty}|\wt{K}_i(l(t_n))|=0,\quad \forall i\in E.\end{equation} By \eqref{eq:dd1}, there exist a subsequence of $\{t_n\}_{n=1}^\infty,$ denoted by $\{t_{n_k}\}_{k=1}^\infty,$ and $l_\infty\in [c,C]^E$ such that
\begin{equation}\label{eq:qmm4}l(t_{n_k})\to l_\infty,\quad k\to\infty.\end{equation}
By the continuity of $\wt{K}(l)$ and \eqref{eq:qmm2}, we have
$$\wt{K}(l_\infty)=0.$$ 
By Theorem~\ref{thm:uniquezero}, $\hat{l}$ is the unique metric with zero Ricci curvature in the class $\R^E_{\geq 0}.$ Hence $l_\infty=\hat{l}.$

Next, we prove the exponential convergence of the extended Ricci flow $l(t)$ to $\hat{l}.$
Let $m=|E|.$ We define $f=(f_1,\cdots,f_m): \R^E_{>0}\to \R^m$ as
$$f_i(l)=\wt{K}_i(l) l_i,\quad \forall\ l\in \R^E_{>0}, i\in E.$$
Then the extended Ricci flow \eqref{exkl} can be written as \begin{equation}\label{eq:5-1-1}\frac{d}{dt}l(t)={f}(l(t)).\end{equation}
Note that the set of critical points of ${f}$ consists of a single point $\hat{l}.$ We calculate the Jacobian matrix of the map ${f}$ at $\hat{l}.$ Since $\hat{l}\in \mathcal{L}(M,\T),$ for any $l$ in a small neighbourhood $V$ of $\hat{l},$
$$f_i(l)=\wt{K}_i(l) l_i=K_i(l) l_i,\quad i\in E.$$
Hence
$$\frac{\partial {f}}{\partial l}(\hat{l})=\Sigma \left(\frac{\partial  K}{\partial l}(\hat{l})\right),$$ where $\Sigma=\mathrm{diag}\{\hat{l}_{1},\cdots,\hat{l}_{m}\}.$ Note that by Proposition~\ref{prop:strictconvex} and \eqref{eq:deriv1},
$\frac{\partial  K}{\partial l}(\hat{l})$ is a negative definite matrix.
 By Lemma~\ref{lem:conv},
$\hat{l}$ is an asymptotically stable point of the flow \eqref{eq:5-1-1}, which is equivalent to the extended Ricci flow \eqref{exkl}. That is, there exists a neighbourhood $\wt{V}\subset V$ of $\hat{l}$, such that for any initial $\wt{l}_0\in \wt{V}$, the solution $\wt{l}(t)$ to the equation \eqref{eq:5-1-1} exists for all time $t\in[0,\infty)$ and converges exponentially fast to $\hat{l}$. By \eqref{eq:qmm4}, there exists a sufficiently large time $t_1$ such that
$$l(t_1)\in \wt{V}.$$ Consider the solution $\wt{l}(t)$ of the flow \eqref{eq:5-1-1} with the initial data $l(t_1),$ which converges exponentially fast to $\hat{l}.$ By the uniqueness of the solution, Theorem~\ref{thm:gp1}, $\wt{l}(t)=l(t+t_1)$ for all $t\geq 0.$ Hence the solution $l(t)$ converges exponentially fast to $\hat{l}$. This proves the result.


\end{proof}

Now we are ready to prove Theorem~\ref{thm:main10}.
\begin{proof}[Proof of Theorem~\ref{thm:main10}]
For any initial data $l_0\in (0,\arccosh 3)^E,$ let $l(t)$ be the solution to the extended Ricci flow \eqref{exkl}. By Theorem~\ref{LONG} and Theorem~\ref{bd}, there exist a constant $C_1=c(l_0,\max_{i\in E}d_i)$ such that
\begin{equation}\label{eq:mm1}l(t)\in (C_1,C_2)^E,\quad \forall t\geq 0,\end{equation} where $C_2=\arccosh 3.$

We first prove that the existence of the hyper-ideal metric with zero Ricci curvature.
By \eqref{eq:mm1} and the continuity of $H$, the set
$\{H(l(t)) : t \ge 0\}\subset \R$ is bounded.
By the monotonicity of $H(l(t)),$ the following limit exists and is finite,
\[\lim_{t \to \infty} H(l(t)) = A.\]
Consider the sequence $\{H(l(n))\}^\infty_{n=1}$. By the mean value theorem, for any
$n \ge 1$ there exists $t_n \in (n, n + 1)$ such that
\beq\label{eq:aaa1}
H(l(n + 1)) - H(l(n)) =\frac{d}{dt}\big |_{t = t_n} H(l(t)) = - \sum_i \wt{K}_i^2(l(t_n))l_i(t_n).
\eeq
Note that $\lim_{n \to \infty} H(l(n + 1)) - H(l(n)) = 0$ and we have the estimate \eqref{eq:mm1}. Passing to the limit, $n\to\infty,$ in \eqref{eq:aaa1}, we have
\begin{equation}\label{eq:mm2}\lim_{n\to \infty}|\wt{K}_i(l(t_n))|=0,\quad \forall i\in E.\end{equation} By \eqref{eq:mm1}, there exist a subsequence of $\{t_n\}_{n=1}^\infty,$ denoted by $\{t_{n_k}\}_{k=1}^\infty,$ and $l_\infty\in [C_1,C_2]^E$ such that
\begin{equation}\label{eq:mm4}l(t_{n_k})\to l_\infty,\quad k\to\infty.\end{equation}
By the continuity of $\wt{K}(l)$ and \eqref{eq:mm2}, we have
$$\wt{K}(l_\infty)=0.$$ Since $l_\infty\in [C_1,C_2]^E,$ $l_\infty\in \mathcal{L}(M,\T)$ by Theorem~\ref{Range}. 

By Theorem~\ref{thm:expc}, for any initial data the extended Ricci flow \eqref{exkl} converges to $l_\infty.$

To obtain a refined estimate for the metric $l_\infty,$ we choose a specific initial data $\bar l_0=\frac{\arccosh 3}{2} \mathbbm{1}_E.$ Let $\bar l(t)$ be the solution to the extended Ricci flow \eqref{exkl}. By Theorem~\ref{LONG} and Theorem~\ref{bd}, we have the estimate
 $$\bar l(t)\in ((3 \max_{i\in E}d_i)^{-1}, C_2)^E,\quad \forall\ t\geq 0.$$ Passing to the limit, we get
 $$l_\infty\in [(3 \max_{i\in E}d_i)^{-1}, C_2]^E.$$

This proves the theorem.



\end{proof}


Now we are ready to prove Theorem~\ref{thm:main0}.
{\begin{proof}[Proof of Theorem~\ref{thm:main0}]
Since $(M,\T)$ is a pseudo 3-manifold, by Theorem~\ref{thm:main10}, there exists a zero-curvature hyper-ideal metric $l\in \mathcal{L}(M, \mathcal{T}).$
Thus, we obtain a hyperbolic metric with totally geodesic boundary on $M$ given by the metric space $S(M,\T,l)$ constructed in Definition~\ref{def:t1}. The uniqueness of the metric follows from Mostow rigidity theorem for hyperbolic manifolds with totally geodesic boundary by \cite{[Fri04]}.
 This proves the result.
\end{proof}}

\textbf{Acknowledgements.} We thank Jiming Ma, Yi Liu for many discussions on related problems in this paper. We shared our results to Feng Luo and Tian Yang in earlier times, and we thank them for helpful comments. 

F. K. is supported by NSFC, no. 11901009, H. G. is supported by NSFC, no. 11871094. B. H. is supported by NSFC, no.11831004 and no. 11926313.

\bibliography{hyper1}
\bibliographystyle{alpha}

\end{document}